\newtheorem{theorem}{Theorem}[section]
\newtheorem{lemma}[theorem]{Lemma}
\numberwithin{equation}{section}
\begin{document}

\address{Nigar Aslanova  \newline
 Institute of Mathematics and Mechanics of  National Academy of Sciences of Azerbaijan, Baku, Azerbaijan\\
 Azerbaijan University of Architecture and Construction,  }
\email{nigar.aslanova@yahoo.com}

\address{khalig Aslanov  \newline
 Azerbaijan State Economic University (UNEC)}
\email{xaliqaslanov@yandex.ru}

\begin{center}
{{\bf ON EXTENSIONS AND SPECTRAL PROBLEMS FOR FOURTH ORDER DIFFERENTIAL OPERATOR EQUATION
}}
\end{center}
\

\begin{center}
\textbf{Nigar Aslanova, Kh. Aslanov}
\end{center}

\

\textbf{Abstract} \textit{ The aim of the paper is firstly to study domains of definitions in terms of boundary conditions of minimal and maximal operators, as well as selfadjoint extensions of a minimal operator associated with the fourth-order differential operator equation. Further, we give necessary and sufficient conditions for that operators to have a purely discrete or continuous spectrum, to exist extension with resolvent from $\sigma_p,$ study asymptotics of spectrum in case pure discrete spectrum. Finally, give the new and more general method for evaluations of regularized traces of operators with discrete spectrum associated with one class boundary value problems. }

\textbf{Keywords:} \textit{ Hilbert space, differential operator equation, selfadjoint extensions with exit from space, spectrum, eigenvalues, trace class operators, regularized trace.}

\textbf{Mathematics Subject Classification:} 34B05, 34G20, 34L20, 34L05, 47A05, 47A10.

\section{Introduction}\label{Sec:1}

Our aim is first  to study domains of definition of  minimal and maximal operators generated by a differential operator expression  in a  space  which  is larger   than one  where the differential expression is considered. Such operators arise upon consideration of boundary value  problems for differential equations when boundary conditions also contain an eigenvalue parameter. Secondly, to give boundary conditions for defining selfadjoint extensions, extensions with a discrete or continuous spectrum. Thirdly, to derive an asymptotic formula for a spectrum in the case of purely discrete spectrum, and finally, to give a  new method for finding regularized trace of the operator associated with the corresponding boundary value problem in one special case. We show a new treat for the deriving  the trace formula, which is more general in comparison with one applied  in our previous works and might be applied in future studies.

 Consider in $L_{2}H,(0,1))$,  where $H$ is an abstract separable Hilbert space, the following  differential expression with operator coefficients
\begin{equation}
ly \equiv y^{IV}(t) + Ay(t) + q(t)y(t)
\end{equation}

Here $A$ and $q(t)$ are operator coefficients. Our assumptions about them are the followings (later,when deriving trace formula we will put some additional requirements on $A$ and $q(t)):$

1. A is a selfadjoint operator in  $H$, moreover $A>I$  , where $I$ is an identity operator in $H$, and $A^{-1}\in {\sigma }_\infty$.

The last  condition provides discreteness of the spectrum of $A$.

 2. $q(t)$ is a weakly measurable, selfadjoint, bounded operator -valued function in H  for  each $t\in [0, 1]$.

So, $q(t)$ is bounded in $H$, while the operator $A$ is bounded only from below. Under that conditions $q(t)$ is bounded also in $L_{2} (H,(0,1)$.

 Consider the direct sum space $H_{1} = L_{2} (H,(0,1)) \oplus H^2_Q$ with the elements $Y= (y(t), y_{1} , y_{2}),$ $Z= (z(t), z_{1}, z_{2})$, where $y_1,y_2,z_1,z_2  \in H$. A scalar product in $H_1$ is defined by
\begin{equation}
{\left(Y,Z\right)}_1={\left(y\left(t\right),z(t)\right)}_{L_2(H,(0,1))}+  \left({Q_1}^{-\frac{1}{2}}y_1,\ {Q_1}^{-\frac{1}{2}}z_1\right)+\left({Q_2}^{-\frac{1}{2}}y_2,\ {Q_2}^{-\frac{1}{2}}z_2\right),
 \end{equation}
$\left(\cdot,\cdot \right)$ is a scalar product in $H$,  $Q_1$ and $Q_2$ are  self-adjoint positive -definite operators in $H$.

Define in $H_1$ an operator ${L'}_0$ in the following way:
\[D\left({L^{'}}_0\right)=\left(Y\backslash Y=\left\{y\left(t\right),Q_1y\left(1\right),Q_2y^{'}\left(1\right),\ \right\},y\left(t\right)\in C^{\infty }_0\left(H_\infty,\left(0,1\right]\right),\ \right.
\]
\[
\left.y\left(1\right)\in D(Q_1),\ y^{'}(1)\in {D(Q}_2), L_0^{'}Y=\{ly,-y^{III}(1),y'(1)\},\; q(t)\equiv 0\right)
\]
where $C^{\infty }_0(H_{\infty },\ (0,1])$ is a class of  vector functions with the values from $H_\infty \equiv\bigcap^\infty_{j=1}{D(A^j)}$  and finite in the vicinity of zero. By integrating by parts  it might be easily verified that  ${L^{'}}_0 $ is symmetric in $H_1$ . Denote its closure by $L_0$ and call it a minimal operator. Adjoint of $L_0$ is denoted by $L^\ast_0$ and  called a maximal operator.

In [1]  the following boundary value problem was considered:

\begin{equation}
l[y]\equiv y^{IV}(t) + Ay(t) = \lambda y(t)
\end{equation}
\begin{equation}
y^{'''} (0) = \lambda Q_{1}y(0), -y^{''}(0) = \lambda Q_{2}y^{'}(0) \end{equation}
\begin{equation}
 \cos C Y^{'}_b - \sin CY_{b}= 0 \end{equation}
where $Q_1{,Q}_2$ are defined as given  above, $Y_{b}=\left(y_b,y^{'}_b\right), Y^{'}_b  =(y^{'''}_b, y^{''}_b),$ and $y_b,y^{'}_b,y^{'''}_b, y^{''}_b$  are regularized values at $t=b$ of $y(t)$ and its derivatives to third order according to [2]. There the  questions of selfadjointness and compactness of the operator corresponding to that problem  in a larger space are studied. But we have the following  notes regarding statements in the indicated work:

1) The author  defines a minimal symmetric operator associated with problem (1.3)-(1.5) in space $L_{2} (H, (0, b))$ as a closure of the symmetric operator $L^{'}_0$  with domain   $D\left(L^{'}_0\right)=C_0^{\infty}(H_\infty, [0, b )),$ which is a set of infinitely many times differentiable vector functions with the  values  from $ H_\infty$,  finite in the vicinity of $b$ and  $L^{'}_0 y(t) \equiv ly$. It is stated in the 
work that the domain of closure of $L^{'}_0$ is given by (1.5), which  obviously is not true, since the closure in $L_{2} (H,(0, b))$ are the functions satisfying $y(b)=y^{'}(b)=y^{''}(b)=y^{'''}(b)=0$ which are just a part of the set of functions satisfying (1.5).

The adjoint operator is denoted by $L^{\ast}_0$.

2) In Theorem 1 from [1] it is stated that if  $Q_1=Q_2$ then the closure  of the operator $L^{'}_B$ with domain of definition  $\left\{Y=\left\{y\left(t\right),\ y_1,y_2\right\}\in H_1,\ y(t)\in C^\infty(H_\infty\left(0,b\right),\ \right\},$  where $y_1=Q_1y\left(0\right), y_2=Q_2y^{'}\left(0\right)$ and 
 $L^{'}_B=\left\{L^{\ast}_0 y\left(t\right), y^{'''}\left(0\right),-y^{''}(0)\right\}$ gives operator whose domain class of functions satisfying conditions (1.5) and which is selfadjoint. But this is obviously not true, since the  indicated closure consists of the vectors $Y=\left\{y\left(t\right),y_1,y_2\right\}$, $y\left(t\right)\in W^4_2\left(H,\left(0,b\right)\right), ly\in L_2\left(H,\left(0,b\right)\right)$ and  $y(t)\in D(A)$.

 For that reason,  we decide don't refer to that but give definitions of the minimal and  maximal operators and selfadjoint extensions, then treat some spectral questions for operator generated by $l[y]$ in direct sum space. Thus:

1. Define a minimal symmetric operator corresponding to differential expression (1.1)  with exit to direct sum space and give boundary conditions defining selfadjoint extensions of that operator.

2. Give conditions for that extension to be discrete or  to have spectrum filling some interval from the real axis. Also, define selfadjoint extensions whose resolvents are from ${\sigma }_p$ which is  Schatten von Neumann class of functions. For that, we will  follow a succession of steps  similar to steps [3], where the  domains of minimal and maximal operators are studied. Note here also [4,5], where selfadjoint extensions and eigenvalue asymptotics for Sturm-Liouville operator equation by exiting to  a larger space  and  [2], where selfadjoint extensions of operators generated by  $2n$-th order differential operator expressions (ones having unbounded operator coefficients) without exit to a  larger space are studied.

 3.Consider the eigenvalue problem
\begin{equation} \label{GrindEQ__6_}
ly=\lambda y
\end{equation}
\begin{equation} \label{GrindEQ__7_}
y(0)=y''(0)=0
\end{equation}
\begin{equation} \label{GrindEQ__8_}
-y'''(1)=\lambda Q_{1} y(1),\, \, \, y''(1)=\lambda Q_{2} y'(1).
\end{equation}
The operator  corresponding to this problem is one of selfadjoint  extensions of the  minimal operator corresponding to (1.1) with an exit to a larger space. We study its eigenvalue distribution.

 4. Give a trace formula for an operator associated with (1.6)-(1.8). For traces, a more general method than one used in our previous works will be suggested. The suggested method will let to treat these problems from unique  point of view.

Results for such problems are applicable to boundary value problems for some classes of partial differential equations .

Recall that since $q(t)$ is bounded  in $L_{2} \left(H,\left(0,1\right)\right)$, the existence of $q(t)$ in $l[y]$ is not essential for  domain of definitions of  minimal, maximal operators and self-adjoint extensions, that is why  when studying  these questions we will take $q(t)\equiv 0$.

We will use the following   notations :$H_{j} $ (a scale of Hilbert spaces generated by $A$)as always denotes $\left(j>0\right)$ a completion of $D(A^j)$ with respect to the scalar product $\left(f,g\right)_{j} =\left(A^{j} f,A^{j} g\right)$  (see [3]) for $j>k,\, \, H_{j} \subseteq H_{k} \subseteq H.$ $H_{-j} $  is a space with a negative norm constructed with respect to $H,\; H_{j} $. $H_{-j} $ is the completion of $H$ in the norm $\left\|A^{-j}f\right\|.$  $H_{-j}$ is usually considered as an adjoint to $H_{j} $ with respect to the scalar product $\left(•,•\right)$, so that for $g\in H_{-j} ,\, f\in H_{j} $, $g(f)$ will be written  as $\left(f,g\right)$. The operator $A$ is an isometric operator from $H_{1} $ to $H$. The adjoint of $A$ denoted  by $\tilde{A}$ acts from $H$ to $H_{-1} $ and is the extension of $A$.

\section {Domains of definition  of adjoint operator, selfadjoint extensions, selfadjoint extensions with compact resolvents }

Recall that  $q(t)\equiv 0$.

\begin{theorem}
The domain $D(L^*_0)$ of $L_0$ consists of those elements $Y=\left(y\left(t\right),Q_1y\left(1\right), \right.$\;  $\left.Q_2y^{'}(1)\right)$ of space $H_1=L_2\left(H,\left(0,1\right)\right)\oplus H^{2}_{Q}$, where
 \[
y\left( t\right) =e^{{\alpha }_{1}\sqrt[4]{\tilde{A}}t}f_{1}+e^{{\alpha }_{2}%
\sqrt[4]{\tilde{A}}t}f_{2}+e^{-\alpha _{1}\sqrt[4]{A}(t-1)}g_{1}+
\]%
\begin{equation}
+e^{-\alpha _{2}\sqrt[4]{A}(t-1)}g_{2}+\int_{0}^{1}{G\left( t,s\right) h(s)ds
},
\end{equation}
\begin{equation} 
G\left(t,s\right)=\left[\frac{e^{{\alpha }_1\sqrt[4]{A}\left|t-s\right|}}{{4\alpha }^3_1}+\frac{e^{{\alpha }_2\sqrt[4]{A}\left|t-s\right|}}{{4\alpha }^3_2}\right]A^{-\frac{3}{4}},
\end{equation}
\[
f_1,\ f_2\epsilon H_{-\frac{1}{8}}, g_1,\ g_2\epsilon H_{\frac{3}{4}}\; (\mbox{or}\; A^{\frac{3}{4}}g_i\in H,\ i=1,2),
\] 
\begin{equation}
Q_1 g_i\in H,\ A^{\frac{1}{4}}Q_{2\ }g_i\epsilon H,\; \mbox{for}\; i=1,2, \end{equation}
${\alpha }_1,{\alpha }_2$  are the roots of the equation ${\alpha }^4=-1$ with negative real parts, so, ${\alpha }_1=e^{\frac{3\pi i}{4}}, {\alpha }_2=e^{\frac{5\pi i}{4}}$
 and
\begin{equation} 
L^*_0Y=\left(ly,-y^{'''}\left(1\right),y^{''}(1)\right).
\end{equation}
\end{theorem}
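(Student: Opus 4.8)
The plan is to compute $D(L_0^*)$ directly from the definition of the adjoint: an element $Y=(y(t),y_1,y_2)\in H_1$ lies in $D(L_0^*)$ if and only if there is a $Z\in H_1$ with $(L_0W,Y)_1=(W,Z)_1$ for every $W\in D(L_0)$, and then $L_0^*Y=Z$. Since $L_0$ is the closure of $L_0'$, it suffices to test this identity on the core $D(L_0')$, i.e.\ on $W=(w(t),Q_1w(1),Q_2w'(1))$ with $w\in C^\infty_0(H_\infty,(0,1])$. First I would expand both inner products using the scalar product (1.2). On the right, because $Q_i^{-1/2}Q_i=Q_i^{1/2}$ is the adjoint of $Q_i^{-1/2}$, the boundary part of $(W,Z)_1$ collapses to $(w(1),z_1)+(w'(1),z_2)$, while on the left the action of $L_0'$ on the two extra components produces $(lw,y)_{L_2}$ together with boundary pairings against $Q_1^{-1}y_1$ and $Q_2^{-1}y_2$.

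Next I would integrate $\int_0^1(w^{IV},y)\,dt$ by parts four times. The contributions at $t=0$ vanish because $w$ is finite near the origin, leaving only the terms $(w'''(1),y(1))-(w''(1),y'(1))+(w'(1),y''(1))-(w(1),y'''(1))$ at $t=1$ and the interior integral $\int_0^1(w,y^{IV})\,dt$; the coefficient term is handled by self-adjointness, $\int_0^1(Aw,y)\,dt=\int_0^1(w,\tilde A y)\,dt$, where the extension $\tilde A$ is forced since $y$ lives in the larger space. Comparing the two sides, the interior parts agree precisely when $z(t)=y^{IV}+\tilde A y=ly\in L_2(H,(0,1))$, which is the regularity constraint singling out the maximal domain. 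Since $w(1),w'(1),w''(1),w'''(1)$ run independently over the dense set $H_\infty$, I would then match boundary coefficients one at a time: the $w'''(1)$ and $w''(1)$ coefficients give $y_1=Q_1y(1)$ and $y_2=Q_2y'(1)$, while the $w(1)$ and $w'(1)$ coefficients identify $z_1=-y'''(1)$ and $z_2=y''(1)$. This yields both the structural form $Y=(y(t),Q_1y(1),Q_2y'(1))$ and formula (2.4).

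It then remains to show that $ly\in L_2(H,(0,1))$ is equivalent to the explicit representation (2.1). Here I would solve the inhomogeneous equation $y^{IV}+\tilde A y=h$ on the Hilbert scale generated by $A$ via spectral calculus. The homogeneous equation has characteristic exponents $\mu=\alpha_j\sqrt[4]{A}$ with $\alpha_j^4=-1$; the two roots $\alpha_1,\alpha_2$ of negative real part give the boundary-layer solutions $e^{\alpha_j\sqrt[4]{\tilde A}t}f_j$ decaying away from $t=0$, the two of positive real part give $e^{-\alpha_j\sqrt[4]{A}(t-1)}g_j$ decaying away from $t=1$, and variation of parameters supplies the particular solution with kernel (2.2). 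Diagonalising $A$ reduces each term to scalar exponentials $e^{\alpha_j a^{1/4}t}$ with $a\in\mathrm{spec}(A)$; the estimate $\int_0^1|e^{\alpha_j a^{1/4}t}|^2\,dt\asymp a^{-1/4}$ then pins down the classes: $y\in L_2$ forces $A^{-1/8}f_j\in H$, i.e.\ $f_j\in H_{-1/8}$, whereas for the $g_j$-family the requirement that the boundary value $y'''(1)$ lie in $H$ (since $z_1=-y'''(1)$ must be a component of $Z\in H_1$) gives $A^{3/4}g_j\in H$, i.e.\ $g_j\in H_{3/4}$; the asymmetry is that the exponential factor $e^{\alpha_j\sqrt[4]{A}}$ smooths the $f_j$-family at $t=1$. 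Finally, the side conditions (2.3), $Q_1g_i\in H$ and $A^{1/4}Q_2g_i\in H$, are exactly what guarantees that the boundary elements $Q_1y(1)$ and $Q_2y'(1)$ are admissible components of $H_1$.

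I expect the main obstacle to be the rigorous justification of the representation (2.1) and the sharpness of the function classes, not the integration by parts, which is routine once the framework is fixed. Specifically, one must (a) make sense of the fractional powers $\sqrt[4]{A}$, $\sqrt[4]{\tilde A}$ and of the operator exponentials as bounded maps between the appropriate members $H_s$ of the scale; (b) prove that every $y$ with $ly\in L_2$ is captured by (2.1), i.e.\ that the decomposition into the two boundary-layer families plus the Green-function term is both possible and unique; and (c) verify that the regularized boundary values $y(1),y'(1),y''(1),y'''(1)$ exist in the correct spaces and that the boundary terms arising in the integration by parts are legitimate for all such $y$ — a point that a priori requires regularization in the sense of the cited references and is most cleanly settled by a closure/density argument from the core $D(L_0')$.
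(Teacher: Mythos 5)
Your proposal is correct in substance, but it takes a genuinely more self-contained route than the paper. The paper's proof of Theorem 2.1 is essentially a two-step reduction: it cites [2] (Gorbachuk--Kochubei), where the maximal operator for $(-1)^n y^{(2n)}+Ay$ in $L_2(H,(0,1))$ is characterized and the representation with all four coefficients $f_1,f_2,g_1,g_2\in H_{-\frac{1}{8}}$ is established, and then argues only the modification needed here --- that because the operator acts in $H_1=L_2(H,(0,1))\oplus H^2_Q$ the boundary data $y'(1)$, $y'''(1)$, $Q_1y(1)$, $Q_2y'(1)$ must lie in $H$, which upgrades the classes of $g_1,g_2$ to $H_{\frac{3}{4}}$ together with condition (2.3). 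You instead carry out the whole adjoint computation from scratch: testing against the core $D(L_0')$, collapsing the $Q_i^{-1/2}$ pairings, integrating by parts four times, matching boundary coefficients to obtain $y_1=Q_1y(1)$, $y_2=Q_2y'(1)$ and formula (2.4), and then deriving the representation (2.1) by spectral calculus, with the scalar estimate $\int_0^1|e^{\alpha_j a^{1/4}t}|^2dt\asymp a^{-1/4}$ pinning down $f_j\in H_{-\frac{1}{8}}$ and the boundary requirements at $t=1$ pinning down $g_j\in H_{\frac{3}{4}}$ and (2.3). Your version buys transparency --- it makes explicit exactly which hypothesis forces which membership class, including the asymmetry between the $f_j$- and $g_j$-families coming from the smoothing factor $e^{\alpha_j\sqrt[4]{A}}$ --- at the cost of having to supply the technical points you correctly flag (uniqueness and exhaustiveness of the decomposition (2.1), and existence of the regularized boundary values), which in the paper are absorbed into the reference to [2]. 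The paper's version is shorter but leaves the entire adjoint computation and the justification of (2.1) implicit. Both arrive at the same characterization, and your boundary-matching correctly reproduces (2.4), so I see no gap in the argument beyond the items you yourself list as requiring rigorous treatment.
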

 Since $g_1,\ g_2\epsilon H$ and $f(A)g=f\left(\tilde{A}\right)g$ for a bounded function $f$ on $H$, then in (2.1) in third and fourth terms we take $A$ but not $\tilde{A}$.
\begin{proof}
In [2],  $ly={\left(-1\right)}^ny^{(2n)}+Ay$   in $L_2\left(H,\left(0,1\right)\right)$ is considered, and there  it was shown that the values of $y(t)$ at endpoints of the interval are from a larger space than $H$, namely,  $f_1,\ f_2,\ g_1,\ g_2\epsilon H_{-\frac{1}{8}}$. But we take $f_1,\ f_2\epsilon H_{-\frac{1}{8}}$, and $g_1,\ g_2\epsilon H_{\frac{3}{4}}$ ,$Q_1g_i\in H,\ A^{\frac{1}{4}}Q_{2 }g_i\epsilon H,$ for $i=1,2$, because we define an operator in $H_1=L_2\left(H,\left(0,1\right)\right)\oplus H^2_Q$ and for that reason   $y^{'}(1),  y^{'''}(1), Q_1y\left(1\right),\ Q_2y^{''}(1)$ must be from $H$.
\end{proof}

As it follows from Theorem 2.1  (relations (2.1),(2.3)) values of $y(t)$  at zero are distributions.

Let $Y_0$ and $Y^{'}_0$ be defined in $H^2$  by
\begin{equation} 
Y_0=\left\{y_0,y^{'}_0\right\},\ \ Y^{'}_0=\{y^{'''}_0,y^{''}_0\}
\end{equation}
where $y_0,y^{'}_0, y^{'''}_0,y^{''}_0$ are regularized values of $y(t)$ and its derivatives at zero which are obtained from [2],by taking $n=2$  
\[y_0=A^{-\frac{1}{8}}y\left(0\right),\  y^{'}_0=A^{-\frac{3}{8}}y^{'}(0),\ {y}^{''}_0=A^{\frac{3}{8}}\left(y^{''}(0)-\sqrt{2}A^{\frac{1}{4}}y'(0)+A^{\frac{1}{2}}y(0)\right)\]
\begin{equation} 
y^{'''}_0=A^{\frac{1}{8}}\left(-y^{'''}\left(0\right)+A^{\frac{1}{2}}y^{'}\left(0\right)+\sqrt{2}A^{\frac{3}{4}}y(0)\right)
\end{equation}
Now let $\tilde{y}=\left\{y_0,{y'}_0\right\},\ \ \widetilde{y'}=\left\{{y'''}_0,{y''}_0\right\} $ be arbitrary vectors from $H^2$. By the similar way  done in [4], [3] the folowing lemma might be easily proved

\textbf{Lemma 2.1}
\textit{ For each} $\left\{\tilde{y},\ \tilde{y}'\right\}\epsilon H^4$ \textit{there exists } $Y=\left\{y\left(t\right),Q_1y\left(1\right),Q_2y^{'}(1)\right\}\in D( L^{\ast}_0)$ \textit{so that} $y_0,y^{'}_0,y^{''}_0,y^{'''}_0$ \textit{are defined by (2.6)}

 \textit{By the methods of the work [6] (where  condition for binary relations to be hermitian is given), [3],[2] and [4](where  Sturm-Liouville operator with unbounded operator coefficient and  with exit to larger space is defined) the following theorem might be easily verified.}

\begin{theorem} The domain of self-adjoint extensions $L^s_0$ of operator $L_0$ in $H_1$ consists of those $Y\in D\left(L^{\ast}_0\right)$ which  satisfy also
\begin{equation} 
\cos CY^{'}_0-\sin CY_0=0
\end{equation}
with a selfadjoint operator $C$ on $H^2 :C=\left(C_1,C_2\right),$  $C_i$ act in $H$ for $i=1,2$, and $Y_0,Y^{'}_0$ are defined by (2.5), (2.6). Without lost of generalization  for  simplifying notations we will take $C=\left(C,C\right)$.
\end{theorem}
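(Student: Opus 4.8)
The plan is to reduce the statement to the abstract theory of self-adjoint extensions of a closed symmetric operator, realized through a boundary (Lagrange-type) form together with the surjectivity of the trace map supplied by Lemma~2.1. Concretely, I would introduce the two boundary maps $\Gamma_0 Y=Y_0=\{y_0,y'_0\}$ and $\Gamma_1 Y=Y'_0=\{y'''_0,y''_0\}$ from $D(L^*_0)$ into $H^2=H\oplus H$, and show that the triple $(H^2,\Gamma_0,\Gamma_1)$ is a boundary triplet for $L^*_0$: namely that the Green identity
\[
(L^*_0 Y,Z)_1-(Y,L^*_0 Z)_1=(\Gamma_1 Y,\Gamma_0 Z)_{H^2}-(\Gamma_0 Y,\Gamma_1 Z)_{H^2}
\]
holds for all $Y,Z\in D(L^*_0)$, and that $(\Gamma_0,\Gamma_1)\colon D(L^*_0)\to H^2\oplus H^2=H^4$ is onto. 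The surjectivity is exactly Lemma~2.1, so the real work lies in the Green identity; once the triplet is in place, the description of self-adjoint extensions by (2.7) follows from the standard correspondence between self-adjoint extensions and self-adjoint linear relations in the boundary space.

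To establish the Green identity I would first compute, for $Y,Z\in D(L^*_0)$ represented as in (2.1), the difference $(L^*_0 Y,Z)_1-(Y,L^*_0 Z)_1$ directly from the definition (2.4) of $L^*_0$ and the scalar product (1.2) of $H_1$. Unwinding the factors $Q_i^{-1/2}$ in (1.2) against the components $\mp y'''(1),\,y''(1)$ and $Q_1y(1),Q_2y'(1)$, the contributions coming from the $H^2_Q$ summand collapse to the plain pairings $-(y'''(1),z(1))+(y''(1),z'(1))$ and their conjugate counterparts. On the other hand, four integrations by parts give
\[
(y^{IV},z)_{L_2}-(y,z^{IV})_{L_2}=\big[(y'''(t),z(t))-(y''(t),z'(t))+(y'(t),z''(t))-(y(t),z'''(t))\big]_0^1,
\]
while the term $(Ay,z)_{L_2}-(y,Az)_{L_2}$ vanishes once it is read in the scale $H_{-j}\subset H\subset H_j$ through the duality pairing and the fact that $\tilde A$ extends $A$. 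The boundary values at $t=1$ produced by the integration by parts cancel \emph{exactly} against the $H^2_Q$ contributions, so that the whole difference reduces to the boundary form at $t=0$ alone,
\[
(L^*_0 Y,Z)_1-(Y,L^*_0 Z)_1=-\big[(y'''(0),z(0))-(y''(0),z'(0))+(y'(0),z''(0))-(y(0),z'''(0))\big].
\]
Finally I would substitute the regularized values (2.6) and verify that this $t=0$ boundary form is precisely $(\Gamma_1 Y,\Gamma_0 Z)_{H^2}-(\Gamma_0 Y,\Gamma_1 Z)_{H^2}$; here the paired exponents $A^{\mp1/8}$ and $A^{\mp3/8}$ combine to $A^0=I$ in the cross terms, while the auxiliary summands carrying the $\sqrt2$ coefficients in $y''_0,y'''_0$ annihilate the diagonal terms, leaving exactly the skew-symmetric symplectic form.

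With the boundary triplet secured, I would invoke the correspondence (in the form used in [6], [3], [2], [4]) between self-adjoint extensions $L^s_0$ of $L_0$ and self-adjoint linear relations $\theta$ in $H^2$, realized as $D(L^s_0)=\{Y\in D(L^*_0):(\Gamma_0 Y,\Gamma_1 Y)\in\theta\}$; in particular $D(L_0)=\ker\Gamma_0\cap\ker\Gamma_1$. Every self-adjoint relation in $H^2$ admits the trigonometric parametrization $\theta=\{(f,g):\cos C\,g-\sin C\,f=0\}$ with a self-adjoint $C=(C_1,C_2)$ on $H^2$ (so that $\sin C$ and $\cos C$ are commuting self-adjoint operators with $\sin^2C+\cos^2C=I$, which guarantees both symmetry and maximality of the relation). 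Substituting $f=\Gamma_0 Y=Y_0$ and $g=\Gamma_1 Y=Y'_0$ gives exactly the boundary condition (2.7). Conversely, for any self-adjoint $C$ the Hermiticity of the corresponding relation, checked through the binary-relation criterion of [6], together with the Green identity makes $L^s_0$ symmetric, and the surjectivity from Lemma~2.1 upgrades symmetry to self-adjointness.

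The main obstacle I anticipate is the middle step: confirming that the $t=0$ boundary form, which a priori pairs distributional traces living in the negative spaces $H_{-1/8},H_{-3/8}$ against ordinary traces, equals the clean symplectic form in the regularized coordinates (2.6). This is where the precise fractional exponents and the $\sqrt2$-corrections of [2] are indispensable, and where one must argue inside the Hilbert scale rather than in $H$ alone; verifying that $(Ay,z)_{L_2}-(y,Az)_{L_2}$ genuinely vanishes for such $Y,Z$ belongs to the same delicacy. By contrast, the abstract extension step is comparatively routine once this identity and Lemma~2.1 are in hand.
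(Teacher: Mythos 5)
Your proposal is correct and is essentially the argument the paper itself invokes: the paper offers no written proof, only a pointer to the methods of [6], [3], [2], [4], and those methods are exactly the boundary-triplet scheme you describe (Green identity in the regularized traces, surjectivity of $(\Gamma_0,\Gamma_1)$ from Lemma~2.1, and the parametrization of self-adjoint relations by $\cos C\,g-\sin C\,f=0$). Your computation of the boundary form checks out — the $t=1$ terms from the integration by parts cancel against the $H^2_Q$ contributions, and the $\sqrt2$-corrections and fractional powers in (2.6) cancel in the skew form at $t=0$ — so nothing further is needed.
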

 
 \textbf{Note 2.1.} Since $q(t)$ is
a selfadjoint and  bounded operator in $H_{1}$ the statement of the theorem remains true also for ${L=L}_0+Q$, where $Q=\left\{q\left(t\right),\ 0,0\right\}$

 Denote selfadjoint extension of $L$ by $L_s.$

\begin{theorem}
A spectrum of selfadloint extensions $L^s_0$ of minimal operator $L_0$ is discrete if and only if $\cos C, { Q}_1A^{-\frac{3}{4}},{Q}_2A^{-\frac{1}{2}}$ are compact.
\end{theorem}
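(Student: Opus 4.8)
The plan is to reduce the assertion to compactness of the resolvent and then to read off the three conditions from an explicit resolvent representation. Since, by the self-adjoint extension theorem above, $L^s_0$ is self-adjoint in $H_1$, every non-real $\lambda$ lies in its resolvent set, and the spectrum of $L^s_0$ is purely discrete if and only if $R_\lambda=(L^s_0-\lambda)^{-1}$ is compact for one, hence every, $\lambda$ in the resolvent set (equivalently, the embedding of $D(L^s_0)$ with the graph norm into $H_1$ is compact). I would fix once and for all such a $\lambda$ (e.g.\ with large negative real part) and study $R_\lambda$.

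First I would construct $R_\lambda$ explicitly. Given $F=(f,f_1,f_2)\in H_1$, solving $(L^s_0-\lambda)Y=F$ means finding $Y=(y,Q_1y(1),Q_2y'(1))\in D(L^\ast_0)$ with $y^{IV}+Ay-\lambda y=f$ on $(0,1)$, together with the two conditions at $t=1$, namely $-y'''(1)-\lambda Q_1y(1)=f_1$ and $y''(1)-\lambda Q_2y'(1)=f_2$, and the condition $\cos C\,Y'_0-\sin C\,Y_0=0$ at $t=0$. Using the representation of $D(L^\ast_0)$ from Theorem 2.1, I would write $y$ as the Green's-function integral $\int_0^1 G_\lambda(t,s)f(s)\,ds$ (the analogue of (2.2) with $A$ replaced by $A-\lambda$) plus the four fundamental solutions $e^{\alpha_i\sqrt[4]{A}\,t}$ and $e^{-\alpha_i\sqrt[4]{A}(t-1)}$, whose operator coefficients $f_1,f_2,g_1,g_2$ are uniquely fixed by the four boundary relations above. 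This exhibits $R_\lambda$ as a sum of a Green part acting on $f$ and boundary parts carrying $f_1,f_2$ and the operators $C,Q_1,Q_2$.

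For sufficiency, assume $\cos C$, $Q_1A^{-3/4}$, $Q_2A^{-1/2}$ are compact, and examine each piece of $R_\lambda$. The Green part is compact: expanding in the eigenbasis of $A$ (which exists since $A^{-1}\in\sigma_\infty$) it splits into scalar Green operators on the bounded interval $(0,1)$ whose norms decay as the eigenvalues $\gamma_k\to\infty$, so it is a norm limit of finite-rank operators. For the boundary parts at $t=1$ I would match the natural scales dictated by (2.1)--(2.3): since $g_i\in H_{3/4}$ one has $y(1)\in H_{3/4}$ and $y'(1)\in H_{1/2}$, while $y'''(1)\sim A^{3/4}g_i$ and $y''(1)\sim A^{1/2}g_i$ lie in $H$; hence the maps producing $Q_1y(1)=(Q_1A^{-3/4})A^{3/4}y(1)$ and $Q_2y'(1)=(Q_2A^{-1/2})A^{1/2}y'(1)$ factor through $Q_1A^{-3/4}$ and $Q_2A^{-1/2}$ and are therefore compact. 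For the boundary part at $t=0$ I would use that $\cos^2C+\sin^2C=I$: where $\cos C$ is compact, the spectral subspace on which $\cos C$ is not small is finite-dimensional (a finite-rank contribution), while on its complement $\sin C$ is boundedly invertible and $\cos C\,Y'_0=\sin C\,Y_0$ reduces, up to a compact remainder, to the Dirichlet-type condition $Y_0=0$, which gives a compact contribution. Summing the pieces yields $R_\lambda\in\sigma_\infty$.

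For necessity I would argue by contradiction through Weyl sequences. If, say, $Q_2A^{-1/2}$ is not compact, choose an orthonormal sequence $\phi_n\rightharpoonup 0$ in $H$ with $\|Q_2A^{-1/2}\phi_n\|\ge\delta>0$; from these I would build normalized $Y_n\in D(L^s_0)$ concentrated near $t=1$ in the single mode $\phi_n$, satisfying the boundary conditions, with $\{(L^s_0-\lambda)Y_n\}$ bounded and $Y_n\rightharpoonup 0$, so that $R_\lambda$ cannot be compact. The cases of $Q_1A^{-3/4}$ and of $\cos C$ (using a sequence on which $\cos C$ is bounded below, forcing genuine Robin-type behaviour in infinitely many channels) are analogous. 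I expect this direction to be the main obstacle: one must produce admissible Weyl sequences that simultaneously lie in $D(L^s_0)$, respect $\cos C\,Y'_0-\sin C\,Y_0=0$, and isolate a single non-compact channel, while the values at $t=0$ are only distributions in the scale $H_{-1/8}$ and the four fundamental solutions are coupled. Making the approximately-Dirichlet splitting for $\cos C$ and the scale bookkeeping for the unbounded $Q_1,Q_2$ fully rigorous is the technical crux; once $R_\lambda$ is decomposed as above, each equivalence follows by combining compactness of the factors with the facts that a finite sum of compact operators is compact, whereas a genuinely non-compact summand transported by bounded, boundedly-invertible maps stays non-compact.
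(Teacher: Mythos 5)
Your overall architecture coincides with the paper's: reduce discreteness of the self-adjoint extension to compactness of $R_\lambda(L^s_0)$ for one non-real $\lambda$, construct the resolvent explicitly as a Green-kernel integral plus the four fundamental solutions $\omega_j(t,\lambda)$ whose coefficients $f_1,f_2,g_1,g_2$ are fixed by the boundary relations, and read the three conditions off the boundary terms. Your sufficiency argument (Green part compact by expansion in the eigenbasis of $A$; the second and third components of $Y$ factoring through $Q_1A^{-3/4}$ and $Q_2A^{-1/2}$ after matching the scales $y(1)\in H_{3/4}$, $y'(1)\in H_{1/2}$; the $t=0$ condition controlled through $\cos C$) is in substance the paper's.

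The genuine gap is the necessity direction, and you have flagged it yourself. The Weyl-sequence plan is never executed, and executing it is not routine: an admissible $Y_n\in D(L^s_0)$ must satisfy $\cos C\,Y'_0-\sin C\,Y_0=0$ together with the two $\lambda$-dependent conditions at $t=1$, and these couple all four coefficients $f_1,f_2,g_1,g_2$ through a full $4\times 4$ operator matrix, so ``isolating a single non-compact channel'' requires solving that system mode by mode --- precisely the bookkeeping you defer. The paper avoids Weyl sequences altogether. It writes the boundary system as $D\,(f_1,f_2,g_1,g_2)^{T}=(\widetilde B_1-\widetilde B_2)\widetilde H$ with $D^{-1}$ bounded (the one analytic input, taken from [7]), so that
\[
R_\lambda(L^s_0)\tilde h=J\,D^{-1}\bigl(\widetilde B_1-\widetilde B_2\bigr)\widetilde H+G,
\]
and then observes that the entries of $J$ and of $\widetilde B_1-\widetilde B_2$ are, up to bounded operators with bounded inverses and compact remainders, exactly $\cos C$, $Q_1A^{-3/4}$ and $Q_2A^{-1/2}$: for instance $\sin C\,A^{-1/8}B_{i,0}=\tfrac{1}{4\alpha_i^{3}}\sin C\,A^{-1}$, $\cos C\,A^{-1/8}B'''_{i,0}=\cos C(\beta I+F)$ with $F$ bounded, and $Q_2k_iA^{-3/4}=\alpha_iQ_2A^{-1/2}\cdot A^{-1/4}\sqrt[4]{A-\lambda I}$ with the last factor boundedly invertible. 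Since each of the three operators thus appears as a block of $R_\lambda$ composed only with boundedly invertible factors plus compacts, non-compactness of any one of them forces non-compactness of $R_\lambda$; both implications fall out of the same decomposition. The cleanest repair of your draft is therefore not to build Weyl sequences but to exhibit each of $\cos C$, $Q_1A^{-3/4}$, $Q_2A^{-1/2}$ in the form $S_1R_\lambda S_2+K$ with $S_1,S_2$ bounded and $K$ compact, which is what the matrix form of the resolvent you already set up provides.
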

\begin{proof}
Let $\lambda $ be non-real, then for selfadjoint extension $L^s_0$ and  $\tilde{h}=\left(h\left(t\right),\ h_1,h_2\right)\in H_1,$  $Y\in D\left(L^s_0\right)$, we consider the equation ,
\begin{equation} 
L^s_0Y-\lambda Y=\tilde{h}
\end{equation}
or in the equivalent form
\begin{equation} 
y^{IV}+Ay-\lambda y=h\left(t\right),
\end{equation}
\begin{equation} 
-y^{'''}\left(1\right)-\lambda Q_1y\left(1\right)=h_1,
\end{equation}
\begin{equation} 
y^{''}\left(1\right)-\lambda Q_2y'\left(1\right)=h_2
\end{equation}
moreover, $Y$ as a vector from the domain of $L^s_0$ satisfies the condition (2.7). From (2.8)-(2.11) the resolvent $R_{\lambda }(L^s_0)$ of $L^s_0$ is
\begin{equation} 
R_{\lambda }\left(L^s_0\right)\tilde{h}=Y=\left( \begin{array}{c}
y(t,\lambda ) \\
Q_1y(1) \\
Q_2y'(1) \end{array}
\right)
\end{equation}
where $y(t, \lambda )$ is the solution of (2.9) defined by
 \[
y\left(t,\lambda \right)=e^{{\alpha }_1\sqrt[4]{A-\lambda I}t}A^{\frac{1}{8}}f_1+e^{{\alpha }_2\sqrt[4]{A-\lambda I}t}{A^{\frac{1}{8}}f}_2+e^{{-\alpha }_1\sqrt[4]{A-\lambda I}\ (t-1)}A^{-\frac{3}{4}}g_1+\]
\begin{equation}
+e^{{-\alpha }_2\sqrt[4]{A-\lambda I }(t-1)}{A^{-\frac{3}{4}}g}_2+\int^1_0{G\left(t,s,\lambda \right)h(s)ds},
\end{equation}
where
\begin{equation} 
f_1,\ f_2\epsilon H, g_1,\ g_2\epsilon H,  Q_1A^{-\frac{3}{4}}g_i\in H,\ Q_{2\ }A^{-\frac{1}{2}}g_i\epsilon H,\; \mbox{for}\; i=1,2.
\end{equation}
Introduce the notations:
\begin{equation} 
{\omega }_j(t,\lambda )=\left\{ \begin{array}{c}
e^{{\alpha }_i\sqrt[4]{A-\lambda I}t}A^{\frac{1}{8}},\; i=1,2,\; j=1,2\qquad \qquad \\
e^{{-\alpha }_i\sqrt[4]{A-\lambda I}  (t-1)}{A^{-\frac{3}{4}}}, \;i=1,2 ,\; j=3,4 \end{array}
\right.
\end{equation}
where, ${\omega }_j\left(t,\lambda \right)f_i,(j=1,2,\ i=1,2)$ and ${\omega }_j\left(t,\lambda \right)g_i$ $(j=3,4, i=1,2)$ from a  fundamental system of solutions of the homogenous equation  corresponding to (2.9)

Let also $Y_0=\left\{y_0,y^{'}_0\right\},\ \ Y^{'}_0=\{y^{'''}_0,y^{''}_0\}$ whose elements are  defined by  (2.6). With (2.13)-(2.15) in mind we can write:
\[R_{\lambda }\left(L^s_0\right)\tilde{h}=\left( \begin{array}{c}
y(t) \\
Q_1y(1) \\
Q_2y'(1) \end{array}
\right)=
\]
\begin{small}
\[
=\left( \begin{array}{c}
{\omega }_1\left(t,\lambda \right)f_1+{\omega }_2{(t,\lambda )f}_2+{\omega }_3{\left(t,\lambda \right)g}_1+{\omega }_4{\left(t,\lambda \right)g}_2+\int^1_0{G\left(t,s,\lambda \right)h(s)ds} \\
{Q_1\omega }_1\left(1,\lambda \right)f_1+Q_1{\omega }_2{\left(1,\lambda \right)f}_2+Q_1A^{-\frac{3}{4}}g_1+{Q_1A^{-\frac{3}{4}}g}_2+Q_1\int^1_0{G\left(1,s,\lambda \right)h(s)ds} \\
{Q_2k_1\omega }_1\left(1,\lambda \right)f_1+Q_2{k_2\omega }_2\left(1,\lambda \right)f_2-Q_2{k_1A}^{-\frac{3}{4}}g_1-{Q_2{k_2A}^{-\frac{3}{4}}g_2}+Q_2\int^1_0{G_t^{'}\left(1,s,\lambda \right)h(s)ds} \end{array}
\right)\]
\end{small}
\[\ k_i={\alpha }_i\sqrt[4]{A-\lambda I}\ ,i=1,2.\]
Rewrite the last relation in the following  matrix  form:
\[
R_{\lambda }\left(L^s_0\right)\tilde{h}=\left( \begin{array}{c}
{\omega }_1\left(t,\lambda \right) \\
{Q_1\omega }_1\left(1,\lambda \right) \\
{Q_2k_1\omega }_1\left(1,\lambda \right) \end{array}
 \begin{array}{c}
{\omega }_2\left(t,\lambda \right) \\
Q_1{\omega }_2{\left(1,\lambda \right)} \\
Q_2{k_2\omega }_2\left(1,\lambda \right) \end{array}
 \begin{array}{c}
{\omega }_3\left(t,\lambda \right) \\
Q_1A^{-\frac{3}{4}} \\
-Q_2{k_1A}^{-\frac{3}{4}} \end{array}
 \begin{array}{c}
{\omega }_4\left(t,\lambda \right) \\
Q_1A^{-\frac{3}{4}} \\
{-Q}_2{k_2A}^{-\frac{3}{4}} \end{array}
\right)\left( \begin{array}{c}
f_1 \\
 \begin{array}{c}
f_2 \\
 \begin{array}{c}
g_1 \\
g_2 \end{array}
 \end{array}
 \end{array}
\right)
\]
\begin{equation} 
+\left( \begin{array}{c}
\int^1_0{G\left(t,s,\lambda \right),h(s)ds} \\
Q_1\int^1_0{G\left(1,s,\lambda\right)h(s)ds} \\
Q_2\int^1_0{G_t^{'}\left(1,s,\lambda\right)h(s)ds} \end{array}
\right)
\end{equation}
Define the vector $\left( \begin{array}{c}
f_1 \\
 \begin{array}{c}
f_2 \\
 \begin{array}{c}
g_1 \\
g_2 \end{array}
 \end{array}
 \end{array}
\right)$ from equalities  (2.7),(2.10),(2.11)  by substituting $y(t,\ \lambda )$ from (2.13) into them.  Introduce some notations.

 Firstly,  let
\[B_j\left(t,\lambda \right)=\left\{ \begin{array}{c}
\frac{e^{{-\alpha }_i\sqrt[4]{A-\lambda I}t}}{{4\alpha }^3_i}A^{-\frac{3}{4}},i=1,2,\ j=1,2 \\
\frac{e^{{\alpha }_i\sqrt[4]{A-\lambda I}t}}{{4\alpha }^3_i}A^{-\frac{3}{4}},i=1,2,\ j=3,4 \end{array}
\right. .\]
Note that with  this  notation
\begin{equation} 
G\left(t,s,\lambda \right)=\left\{ \begin{array}{c}
\left[{B_3(t,\lambda )e}^{{\alpha }_1\sqrt[4]{A-\lambda I}(-s)}+{B_4(t,\lambda )e}^{{\alpha }_2\sqrt[4]{A-\lambda I}(-s)}\right]\ ,\ \ s\le t \\
\left[{B_1(t,\lambda )e}^{{\alpha }_1\sqrt[4]{A-\lambda I}s}+B_2{(t,\lambda )e}^{{\alpha }_2\sqrt[4]{A-\lambda I}s}\right],\ \ \ t\le s \end{array}
\right.
\end{equation}
Now we will write boundary conditions (2.7),(2.10),(2.11) in the matrix form with $y\left(t\right)$ defined from (2.13) and with (2.17) in mind. For simplifying $G^{\left(n\right)}(0,s,\lambda ), n=1,4,$ (since $t=0\le s$ we have to use the second row expressions from (2.17)). Introduce the following notations,  obtained by taking $B_j(t,\lambda )$ in (2.6) instead of $y\left(t \right)$:
\[B^{'''}_{1,0}=A^{\frac{1}{8}}\left[-\frac{1}{{4\alpha }^3_1}{\left({\alpha }_1\sqrt[4]{A-\lambda I}\right)}^3A^{-\frac{3}{4}}+{\frac{1}{4{\alpha }^2_1}}\sqrt[4]{A-\lambda I}A^{-\frac{1}{4}}+\frac{\sqrt{2}}{{4\alpha }^3_1}\right]\]
\[B^{'''}_{2,0}=A^{\frac{1}{8}}\left[-\frac{1}{{4\alpha }^3_2}{\left({\alpha }_2\sqrt[4]{A-\lambda I}\right)}^3A^{-\frac{3}{4}}+{\frac{1}{{4\alpha }^2_2}}\sqrt[4]{A-\lambda I}A^{-\frac{1}{4}}+\frac{\sqrt{2}}{4{\alpha }^3_2}\right]\]
                              
\[B^{''}_{1,0}=A^{\frac{3}{8}}\left[\frac{1}{4{\alpha }^3_1}{\left({\alpha }_1\sqrt[4]{A-\lambda I}\right)}^2A^{-\frac{3}{4}}-{\frac{\sqrt{2}}{4{\alpha }^2_1}}\sqrt[4]{A-\lambda I}A^{-\frac{1}{2}}+\frac{\sqrt{2}}{{4\alpha }^3_1}A^{-\frac{1}{4}}\right]\]
\[B^{''}_{2,0}=A^{\frac{3}{8}}\left[\frac{1}{{4\alpha }^3_2}{\left({\alpha }_2\sqrt[4]{A-\lambda I}\right)}^2A^{-\frac{3}{4}}-{\frac{\sqrt{2}}{{4\alpha }^2_2}}\sqrt[4]{A-\lambda I}A^{-\frac{1}{2}}+\frac{\sqrt{2}}{{4\alpha }^3_2}A^{-\frac{1}{4}}\right]
\]
\[
B_{1,0}={\frac{1}{4{\alpha }^3_1}A}^{-\frac{1}{8}}A^{-\frac{3}{4}} ,      B_{2,0}{=\frac{1}{4{\alpha }^3_2}A}^{-\frac{1}{8}}A^{-\frac{3}{4}},       B^{'}_{1,0}={-A}^{-\frac{3}{8}}\frac{{\alpha }_1\sqrt[4]{A-\lambda I}}{{4\alpha }^3_1}A^{-\frac{3}{4}}\]
\begin{equation}
B^{'}_{2,0}=-A^{-\frac{3}{8}}\frac{{\alpha }_2\sqrt[4]{A-\lambda I}}{{4\alpha }^3_2}A^{-\frac{3}{4}}
\end{equation}

Regularized  values of  ${\omega }_j(t,\lambda $ ) and its derivatives at zero defined by (2.6) denote by ${\omega }_{j,0}$, ${\omega '}_{j,0}$, ${\omega ''}_{j,0}$, ${\omega '''}_{j,0}$ and  for a shortcut of notations denote the  values of ${\omega }_j(t,\lambda $)  and its derivatives at 1  by  ${\omega }_j\left(1\right),{\omega }_j'\left(1\right),{\omega }_j''\left(1\right),\ \ {\omega }_j'''\left(1\right)$, respectively. Hence,
\[{\omega }_j(1,\lambda )\equiv {\omega }_j\left(1\right)=\left\{ \begin{array}{c}
e^{{\alpha }_i\sqrt[4]{A-\lambda I}}A^{\frac{1}{8}},\;i=1,2,\; j=1,2 \\
{A^{-\frac{3}{4}}},\;i=1,2,\; j=3,4 \end{array}
\right., \]
\[{\omega }_j^{'}(1,\lambda )\equiv {\omega^{'}}_j\left(1\right)=\left\{ \begin{array}{c}
{{\alpha }_i\sqrt[4]{A-\lambda I}e}^{{\alpha }_i\sqrt[4]{A-\lambda I}}A^{\frac{1}{8}},\; i=1,2,\; j=1,2 \\
-{\alpha }_i\sqrt[4]{A-\lambda I}\ {A^{-\frac{3}{4}}},\; i=1,2 ,\; j=3,4 \end{array}
\right.\]
\[{\omega }_j^{''}(1,\lambda )\equiv {\omega }_j^{''}\left(1\right)=\left\{ \begin{array}{c}
{{\left({\alpha }_i\sqrt[4]{A-\lambda I}\right)}^2e}^{{\alpha }_i\sqrt[4]{A-\lambda I}}A^{\frac{1}{8}},\ \ \ i=1,2,\ j=1,2 \\
{\left({\alpha }_i\sqrt[4]{A-\lambda I}\right)}^2{A^{-\frac{3}{4}}},\; i=1,2,\; j=3,4 \end{array}
\right.\]
\[{\omega }_j'''(1,\lambda )\equiv {\omega }_j'''\left(1\right)=\left\{ \begin{array}{c}
{{\left({\alpha }_i\sqrt[4]{A-\lambda I}\right)}^3e}^{{\alpha }_i\sqrt[4]{A-\lambda I}}A^{\frac{1}{8}},\;i=1,2,\; j=1,2 \\
{\left(-{\alpha }_i\sqrt[4]{A-\lambda I}\right)}^3{A^{-\frac{3}{4}}},\; i=1,2,\; j=3,4 \end{array}
\right.\]

With all that notations substituting  $y(t, \lambda )$ into (2.7),(2.10),(2.11)  and then writing them in the matrix form  we have :

\[\left[\left( \begin{array}{c}
 \begin{array}{cc}
cosC &  \begin{array}{ccc}
O & O & O \end{array}
 \end{array}
 \\
 \begin{array}{ccc}
O & cosC &  \begin{array}{cc}
O & O \end{array}
 \end{array}
 \\
 \begin{array}{c}
 \begin{array}{ccc}
O & O &  \begin{array}{cc}
I & O \end{array}
 \end{array}
 \\
 \begin{array}{ccc}
O & O &  \begin{array}{cc}
O & I \end{array}
 \end{array}
 \end{array}
 \end{array}
\right)\left( \begin{array}{ccc}
{\omega '''}_{1,0} & {\omega '''}_{2,0} &  \begin{array}{cc}
{\omega '''}_{3,0} & {\omega '''}_{4,0} \end{array}
 \\
{\omega ''}_{1,0} & {\omega ''}_{2,0} &  \begin{array}{cc}
{\omega ''}_{3,0} & {\omega ''}_{4,0} \end{array}
 \\
 \begin{array}{c}
{\omega }_1'''\left(1\right) \\
{\omega }_1''\left(1\right) \end{array}
 &  \begin{array}{c}
{\omega }_2'''\left(1\right) \\
{\omega }_2''\left(1\right) \end{array}
 &  \begin{array}{cc}
 \begin{array}{c}
{\omega }_3^{'''}\left(1\right) \\
{\omega }_3^{''}\left(1\right) \end{array}
 &  \begin{array}{c}
{\omega }_4^{'''}\left(1\right) \\
{\omega }_4^{''}\left(1\right) \end{array}
 \end{array}
 \end{array}
\right)-\right.
\]
\[
-\left( \begin{array}{ccc}
\sin C & O &  \begin{array}{cc}
O & O \end{array}
 \\
O & sinC &  \begin{array}{cc}
O & O \end{array}
 \\
 \begin{array}{c}
O \\
O \end{array}
 &  \begin{array}{c}
O \\
O \end{array}
 &  \begin{array}{cc}
 \begin{array}{c}
I \\
O \end{array}
 &  \begin{array}{c}
O \\
I \end{array}
 \end{array}
 \end{array}
\right)\times
\]
\[
\left.\times\left( \begin{array}{ccc}
{\omega }_{1,0} & {\omega }_{2,0} &  \begin{array}{cc}
{\omega }_{3,0} & {\omega }_{4,0} \end{array}
 \\
{\omega '}_{1,0} & {\omega '}_{2,0} &  \begin{array}{cc}
{\omega '}_{3,0} & {\omega '}_{4,0} \end{array}
 \\
 \begin{array}{c}
{\lambda Q_1\omega }_1\left(1\right) \\
{\lambda Q_2\omega }_1'\left(1\right) \end{array}
 &  \begin{array}{c}
{\lambda Q_1\omega }_2\left(1\right) \\
{\lambda Q_2\omega }_2'\left(1\right) \end{array}
 &  \begin{array}{cc}
 \begin{array}{c}
{\lambda Q_1\omega }_3\left(1\right) \\
{\lambda Q_2\omega }_3'\left(1\right) \end{array}
 &  \begin{array}{c}
{\lambda Q_1\omega }_4\left(1\right) \\
{\lambda Q_2\omega }_4'\left(1\right) \end{array}
 \end{array}
 \end{array}
\right) \right]
\left( \begin{array}{c}
f_1 \\
f_2 \\
 \begin{array}{c}
g_1 \\
g_2 \end{array}
 \end{array}
\right)=
\]
\[
=\left[ \left(
\begin{array}{ccc}
cosCA^{-\frac{1}{8}}{B^{\prime \prime \prime }}_{1,0} & cosCA^{-\frac{1}{8}}{%
B^{\prime \prime \prime }}_{2,0}\  & O \\
cosC{A^{-\frac{1}{8}}B^{\prime \prime }}_{1,0} & cosC{A^{-\frac{1}{8}%
}B^{\prime \prime }}_{2,0} & O \\
O & O & \frac{A^{-\frac{7}{8}}}{4{\alpha }_{1}^{3}}\left[ {\left( {\alpha }%
_{1}\sqrt[4]{A-\lambda I}\right) }^{3}+\lambda Q_{1}\right]  \\
O & O & \frac{A^{-\frac{7}{8}}}{4{\alpha }_{1}^{3}}\left[ {\left( {\alpha }%
_{1}\sqrt[4]{A-\lambda I}\right) }^{2}+\lambda Q_{2}{\alpha }_{1}\sqrt[4]{%
A-\lambda I}\right]
\end{array}%
\right. \right.
\]%

\[
\left. \left.
\begin{array}{ccc}
O & O & O \\
O & O & O \\
\frac{A^{-\frac{7}{8}}}{4{\alpha }_{2}^{3}}\left[ {\left( {\alpha }_{2}\sqrt[%
4]{A-\lambda I}\right) }^{3}+\lambda Q_{1}\right]  & I & O \\
\frac{A^{-\frac{7}{8}}}{4{\alpha }_{2}^{3}}\left[ {\left( {\alpha }_{2}\sqrt[%
4]{A-\lambda I}\right) }^{2}+\lambda Q_{2}\left( {\alpha }_{2}\sqrt[4]{%
A-\lambda I}\right) \right]  & O & I%
\end{array}%
\right) \right. -
\]
\[
-\left.\left(
\begin{array}{cccccc}
sinCA^{-\frac{1}{8}}B_{1,0} & sinCA^{-\frac{1}{8}}B_{2,0}\  & O & O & O & O
\\
sinCA^{-\frac{1}{8}}{B^{\prime }}_{1,0} & sinCA^{-\frac{1}{8}}{B^{\prime }}%
_{2,0} & O & O & O & O \\
O & O & O & O & O & O \\
O & O & O & O & O & O%
\end{array}%
\right) \right]\times
\]

\begin{equation} 
\times \left(
\begin{array}{c}
A^{\frac{1}{8}}\int_{0}^{1}{e^{{\alpha }_{1}\sqrt[4]{A-\lambda I}s}h(s)ds}
\\
A^{\frac{1}{8}}\int_{0}^{1}{e^{{\alpha }_{2}\sqrt[4]{A-\lambda I}s}h(s)ds}
\\
A^{\frac{1}{8}}\int_{0}^{1}{e^{{\alpha }_{1}\sqrt[4]{A-\lambda I}\
(1-s)}h(s)ds} \\
A^{\frac{1}{8}}\int_{0}^{1}e^{{\alpha }_{2}\sqrt[4]{A-\lambda I}\ \left(
1-s\right) }h\left( s\right) ds \\
h_{1} \\
h_{2}%
\end{array}%
\right)
\end{equation}
In the last column in (2.19) the integral terms are from $D(A^{\frac{1}{8}})$ and to get terms from $H$ we put before  them the factors $A^{\frac{1}{8}}$,  that is why there appear the factors $A^{-\frac{1}{8}}$ in front of $B's$  in  matrices within braces in right of (2.19).

Denote the matrix within the brackets in the left hand side of (2.19) by $D,$ the difference of matrices  in the brackets on the right of (2.16) by $\widetilde{B}_1-\widetilde{B}_2$ and column matrix on the right by   $\widetilde{H},$ respectively. Each term of $\tilde{H}$ is from  $H$, because  linear operators $h(s)\to \int\limits^1_0{e^{{\alpha }_i\sqrt[4]{A-\lambda I}s}h(s)ds},\; h(s)\to \int^1_0{e^{{\alpha }_i\sqrt[4]{A-\lambda I} (1-s)}h(s)ds}$  continuously act from $L_2\left(H,\left(0,1\right)\right)$ to  $H_{\frac{1}{8}}$  as the adjoint to the operator $f\to e^{{\alpha }_i\sqrt[4]{A-\lambda I}s}f$ which is continuously acts from $H_{-\frac{1}{8}}$ to $L_2\left(H,\left(0,1\right)\right)$ (see [3]). Hence,

\begin{equation} 
\left( \begin{array}{c}
f_1 \\
f_2 \\
 \begin{array}{c}
g_1 \\
g_2 \end{array}
 \end{array}
\right)=D^{-1}\left(\widetilde{B}_1-\widetilde{B}_2\right)\tilde{H},
\end{equation}
Denote in (2.16), the matrix in front of the vector $\left( \begin{array}{c}
f_1 \\
 \begin{array}{c}
f_2 \\
 \begin{array}{c}
g_1 \\
g_2 \end{array}
 \end{array}
 \end{array}
\right)$ by $J$  and the second term by G

\begin{equation} 
R_{\lambda }\left(L^0_s\right)\tilde{h}=J\widetilde{D}^{-1} (B_1-\widetilde{B}_2)\widetilde{H}+G,
\end{equation}
In a similar way as in [7] it might be  easily verified that $D^{-1}$ is bounded  in $H$. In the matrix operator  $\widetilde{B_2}$ the term $\sin C  A^{-\frac{1}{8}}B_{i,0}$ is  $\sin C A^{-\frac{1}{8}}B_{i,0}=\frac{\sin CA^{-1}}{4\alpha_i ^{3}}$ and in $\widetilde{B}_1$ the term $\cos C A^{-\frac{1}{8}}B^{'''}_{i,0}$  (in other terms too) is representable as $\cos C A^{-\frac{1}{8}}B^{'''}_{i,0}=
\cos C(\beta I+F)$  where $\beta$ is a number defined by the  coefficients of the terms $\widetilde{B}_1$, $F$ is a bounded  operator in $H$.   It follows that $R_{\lambda }\left(L^{0}_{s}\right)$ is compact if and only if $\cos C, Q_1 A^{-\frac{3}{4}},Q_2 A^{-\frac{1}{2}}$ are  compact. 
\end{proof}

\textbf{Note 2.2.}  Since $q(t)$ is bounded the in $L_2\left(H,\left(0,1\right)\right)$, then  in virtue of relation
\begin{equation} 
R_{\lambda }\left(L_s\right)=R_{\lambda }\left(L^0_s\right)-R_{\lambda }\left(L_s\right)QR_{\lambda }\left(L^0_s\right),
\end{equation}
where  $L_s=L^s_0+Q,$
\begin{equation} 
Q Y=\left\{q\left(t\right)y\left(t\right),\ 0,0\right\} ,
\end{equation}
statement of Theorem 2.3 holds also for selfadjoint extensions $L_s$  of the minimal operator $L=L_0+Q$.

 \section{Asymptotics of eigenvalue distribution of one class of selfadjoint extensions and definition of the domain of selfadjoint extensions whose resolvents are from $\sigma_p$ and extensions with continuous spectrum }

 Take in boundary conditions (2.7)
 $C=\left(C_{1}, C_{2} \right)$, where $C_{1} $ and $C_{2} $  are operators on $H$, moreover $C_1=\frac{\pi }{2}I$ ($I$ is an identity operator in $H$), $C_2=arctg(-\sqrt{2}A)$, then the corresponding selfadjoint extension  will be given by the boundary conditions $y(0)=y^{''}(0)=0$. The eigenvalue problem corresponding to that operator  is:
\begin{equation} 
ly=\lambda y
\end{equation}
\begin{equation} 
y(0)=y^{''}(0)=0.
\end{equation}
\begin{equation} 
-y^{'''}(1)=\lambda Q_{1} y(1),\, \, \, y^{''}(1)=\lambda Q_{2} y^{'}(1)
\end{equation}
Note here that boundary conditions (3.2),(3.3) are obtained from (1.4),(1.5) with indicated above choice of $C$ and by the setting $b=1$ and making a change of variable  $1-t=x$.

The operator corresponding to that problem  for $q(t)\equiv 0$ denote by $L^0_1$ which in virtue of Theorems 2.2 and 2.3 is self-adjoint and discrete.

 Now we study the asymptotics of eigenvalues of that operator.

 The solution of (3.1)  is
\[
y\left(t,\lambda \right)=e^{{\alpha }_1\sqrt[4]{A-\lambda I}t}A^{\frac{1}{8}}f_1+e^{{\alpha }_2\sqrt[4]{A-\lambda I}t}{A^{\frac{1}{8}}f}_2+\]
\begin{equation} 
+e^{{-\alpha }_1\sqrt[4]{A-\lambda I }(t-1)}{A^{-\frac{3}{4}}g}_1+e^{{-\alpha }_2\sqrt[4]{A-\lambda I (}t-1)}{A^{-\frac{3}{4}}g}_2
\end{equation}
with  $f_1,\ f_2,g_1,g_2\in H$ defined as in (2.14).

Substituting  the   function (3.4) in the the boundary conditions (3.2), 
\begin{equation} 
A^{\frac{1}{8}}f_1+{A^{\frac{1}{8}}f}_2+e^{{\alpha }_1\sqrt[4]{A-\lambda I\ \ }}{A^{-\frac{3}{4}}g}_1+e^{{\alpha }_2\sqrt[4]{A-\lambda I}}{A^{-\frac{3}{4}}}g_2=0
\end{equation}
\[
{{\left({\alpha }_1\sqrt[4]{A-\lambda I}\right)}^2A}^{\frac{1}{8}}f_1+{{{\left({\alpha }_2\sqrt[4]{A-\lambda I}\right)}^2A}^{\frac{1}{8}}f}_2+{{\left({\alpha }_1\sqrt[4]{A-\lambda I }\right)}^2}e^{{\alpha }_1\sqrt[4]{A-\lambda I }}\times
\]
\begin{equation} 
\times{A^{-\frac{3}{4}}g}_1+{{\left({\alpha }_2\sqrt[4]{A-\lambda I }\right)}^2e}^{{\alpha }_2\sqrt[4]{A-\lambda I}}{A^{-\frac{3}{4}}g}_2=0
\end{equation}
rewrite (3.5),(3.6) as
\[
\left(A^{\frac{1}{8}}f_1+e^{{\alpha }_1\sqrt[4]{A-\lambda I\ \ }}{A^{-\frac{3}{4}}g}_1\right)+\left({A^{\frac{1}{8}}f}_2+e^{{\alpha }_2\sqrt[4]{A-\lambda I}}{A^{-\frac{3}{4}}g}_2\right)=0\]
\[{\left({\alpha }_1\sqrt[4]{A-\lambda I}\right)}^2\left(A^{\frac{1}{8}}f_1+e^{{\alpha }_1\sqrt[4]{A-\lambda I\ \ }}{A^{-\frac{3}{4}}g}_1\right)+
\]\[
+{\left({\alpha }_2\sqrt[4]{A-\lambda I}\right)}^2\left({A^{\frac{1}{8}}f}_2+e^{{\alpha }_2\sqrt[4]{A-\lambda I}}{A^{-\frac{3}{4}}g}_2\right)=0\]
hence
\begin{equation} \label{GrindEQ__2_7_}
f_1=-A^{-\frac{1}{8}}e^{{\alpha }_1\sqrt[4]{A-\lambda I\ }}{A^{-\frac{3}{4}}g}_1={-A}^{-\frac{7}{8}}e^{{\alpha }_1\sqrt[4]{A-\lambda I\ \ }}g_1={-A}^{-\frac{7}{8}}e^{\sqrt[4]{\lambda I-A}}g_1,
\end{equation}
 \[
f_2={-A}^{-\frac{1}{8}}e^{{\alpha }_2\sqrt[4]{A-\lambda I}}{A^{-\frac{3}{4}}g}_2=-A^{-\frac{7}{8}}e^{{\alpha }_2\sqrt[4]{A-\lambda I}}
g_2=\]
\begin{equation}
=-A^{-\frac{7}{8}}e^{i{\alpha }_1\sqrt[4]{A-\lambda I}}g_2=-A^{-\frac{7}{8}}e^{i\sqrt[4]{\lambda I-A}}g_2
\end{equation}
Taking in (3.4)  $f_1$ and $f_2\ $ as in (3.7), (3.8) yields
\[y\left(t,\lambda \right)=-e^{{\alpha }_1\sqrt[4]{A-\lambda I}t}A^{\frac{1}{8}}A^{-\frac{7}{8}}e^{\sqrt[4]{\lambda I-A }}g_1-e^{{\alpha }_2\sqrt[4]{A-\lambda I}t}A^{\frac{1}{8}}A^{-\frac{7}{8}}e^{i\sqrt[4]{\lambda I-A}}g_2+
\]\[
+e^{-\sqrt[4]{\lambda I-A (}t-1)}{A^{-\frac{3}{4}}g}_1+e^{-\sqrt[4]{\lambda I-A (}t-1)}{A^{-\frac{3}{4}}g}_2=-e^{\sqrt[4]{\lambda I-A}t}{A^{-\frac{3}{4}}e}^{\sqrt[4]{\lambda I-A }}g_1+
\]
\[+e^{-\sqrt[4]{\lambda I-A} (t-1)}{A^{-\frac{3}{4}}g}_1-e^{i\sqrt[4]{\lambda I-A}t}{A^{-\frac{3}{4}}e}^{i\sqrt[4]{\lambda I-A}}g_2+
\]\[
+e^{-i\sqrt[4]{\lambda I-A}(t-1)}{A^{-\frac{3}{4}}g}_2={-2sh\sqrt[4]{\lambda I-A}tA^{-\frac{3}{4}}e}^{\sqrt[4]{\lambda I-A\ \ }}g_1-2i{{\sin\sqrt[4]{\lambda I-A}tA}^{-\frac{3}{4}}e}^{i\sqrt[4]{\lambda I-A}}g_2.\]
Denoting $F_1={{-2e}^{\sqrt[4]{\lambda I-A\ }}{A^{-\frac{3}{4}}g}_1,\ \ \ F}_2={2ie}^{\sqrt[4]{\lambda I-A}}{A^{-\frac{3}{4}}g}_2$, $F_1,\ F_2\epsilon H$
 we have
\begin{equation} 
y\left(t\right)=sh\sqrt[4]{\lambda I-A}tF_1+{\sin\sqrt[4]{\lambda I-A}tF}_2
\end{equation}
Writing that solution in boundary conditions (3.3),  from expansion of a selfadjoint operator with discrete spectrum  $A=\sum\limits^{\infty }_{k=1}{{\gamma }_k}\left(\cdot ,{\varphi }_k\right){\varphi }_k$,   where   ${\gamma }_k$ are eigenvalues and ${\varphi }_k$ are  orthonormal basis formed by eigenvectors of $A,$ we have
\[{-\sqrt[4]{\lambda -{\gamma }_k}}^3ch\sqrt[4]{\lambda -{\gamma }_k}\left(F_1,\ {\varphi }_k\right)+{\sqrt[4]{\lambda -{\gamma }_k}}^3cos\sqrt[4]{\lambda -{\gamma }_k}\left(F_2,\ {\varphi }_k\right)=
\]
\begin{equation}
=\lambda sh\sqrt[4]{\lambda -{\gamma }_k}\left(F_1,\ {Q_1\varphi }_k\right)+\lambda sin\sqrt[4]{\lambda -{\gamma }_k}\left(F_2,\ {Q_1\varphi }_k\right)
\end{equation}
and
\[\sqrt{\lambda -{\gamma }_k}sh\sqrt[4]{\lambda -{\gamma }_k}\left(F_1,\ {\varphi }_k\right)-\sqrt{\lambda -{\gamma }_k}sin\sqrt[4]{\lambda -{\gamma }_k}\left(F_2,\ {\varphi }_k\right)=\]
\[\lambda \sqrt[4]{\lambda -{\gamma }_k}ch\sqrt[4]{\lambda -{\gamma }_k}\left(F_1,\ {Q_2\varphi }_k\right)+\lambda \sqrt[4]{\lambda -{\gamma }_k}cos\sqrt[4]{\lambda -{\gamma }_k}\left(F_2,\ {Q_2\varphi }_k\right).\]
 For simplicity of calculations take here
 $Q_1=Q_2=A^{\alpha },\; \mbox{and}\; 0<\alpha <\frac{1}{2}$, which is important for holding the hypothesis of theorem 2.3. 

By replacing
\begin{equation} 
\sqrt[4]{\lambda -{\gamma }_k}=z, \left(F_1,\ {\varphi }_k\right)=c_{1k},\ \left(F_2,\ {\varphi }_k\right)=c_{2k}
\end{equation}
in virtue of ${{Q_i{\varphi }_k=\gamma }_k}^{\alpha }{\varphi }_k,\; i=1,2$    we have
\begin{equation} 
-z^3chzc_{1k}+z^3\cos z{c}_{2k}=\left(z^4+{\gamma }_k\right)shz {\gamma }^{\alpha }_kc_{1k}+\left(z^4+{\gamma }_k\right)\sin z {\gamma }^{\alpha }_kc_{2k}
\end{equation}
\begin{equation} 
zshzc_{1k}-z\sin z{ c}_{2k}=\left(z^4+{\gamma }_k\right)chz {\gamma }^{\alpha }_kc_{1k}+\left(z^4+{\gamma }_k\right)\cos z {\gamma }^{\alpha }_kc_{2k}
\end{equation}
which is a system of equations in $c_{1k}$, $c_{2k}$ and has nonzero roots if and only if the characteristic determinant  $\Delta (z)$ of boundary value problem (3.1)-(3.3)  (determinant formed by coefficients of (3.12, (3.13)) is zero:
\begin{equation} 
\Delta \left(z\right)=\left| \begin{array}{cc}
-z^3chz-\left(z^4+{\gamma }_k\right)shz\ {\gamma }^{\alpha }_k & z^3 \cos z-\left(z^4+{\gamma }_k\right)\sin z\ {\gamma }^{\alpha }_k \\ \\
zshz-\left(z^4+{\gamma }_k\right)chz\ {\gamma }^{\alpha }_k & -zsinz-\left(z^4+{\gamma }_k\right)\cos z\ {\gamma }^{\alpha }_k \end{array}
\right|=0
\end{equation}
After simplifications in (3.14)
\begin{equation} 
tgz=\frac{-2z^3\left(z^4+{\gamma }_k\right)\ {\gamma }^{\alpha }_k+z^4thz-{\left(z^4+{\gamma }_k\right)}^2{\gamma }^{2\alpha }_kthz}{z^4+2z\left(z^4+{\gamma }_k\right)\ {\gamma }^{\alpha }_kthz-{\left(z^4+{\gamma }_k\right)}^2{\gamma }^{2\alpha }_k}
\end{equation}
Since $\lambda $ as an  eigenvalue of a selfadjoint operator must be real, feasible values for  $z$ are $y,-y, iy,-iy (y>0)$ or $\pm y\pm iy$.

Thus,  (3.15) might have only real, imaginary roots and roots of the form $y\pm iy, y$ is real. It can't have other complex roots with exception of these roots, because complex roots will  give complex eigenvalues for a selfadjoint operator, which is impossible.

 Hence, setting $\sqrt[4]{{\lambda }_{k,j}-{\gamma }_k}=z_{k,j}$ from (3.15) for real roots as $\left|z\right|\to \infty $ we have
 $z=z_{k,j}\sim \frac{\pi }{4}+\pi j+O\left(\frac{1}{k}\right), k$ is an entire number large in modulus  .

 Writing  in (3.15) iz in place of $z$ shows that if $z$ is a real root, then iz is also  a root of that equation, thus for imaginary roots
\[z=iz_{k,j}\sim \left(\frac{\pi }{4}+\pi j+O\left(\frac{1}{k}\right)\right)i,
\]
But in virtue of (3.11)
\[\lambda =z^4+{\gamma }_k\]
which shows that real and imaginary roots of  (3.15) result in the same eigenvalues but in linearly dependent  eigenvectors of the  operator and that is why algebraic multiplicity of each eigenvalue corresponding those roots is 2.

 Writing in (3.15)  $y\pm iy$ in place of $z$ after simplifications we get
\[-4y^4\left[\frac{sin2y}{2}-\frac{sh2y}{2i}\right]+4iy^2\left(y+iy\right)\left({\gamma }_k-4y^4\right){{\gamma }_k}^{\alpha }\left[\frac{cos2y}{2i}+\frac{ch2y}{2}\right]\]
\[+2y\left(y+iy\right)\left({\gamma }_k-4y^4\right){{\gamma }_k}^{\alpha }\left[\frac{cos2y}{2i}-\frac{ch2y}{2i}\right]+4y^4\left[\frac{sh2y}{2}+\frac{isin2y}{2}\right]+\]
\begin{equation} 
+{\left({\gamma }_k-4y^4\right)}^2{{\gamma }_k}^{2\alpha }\left[\frac{sh2y}{2}+\frac{isin2y}{2}\right]-{\left({\gamma }_k-4y^4\right)}^2{{\gamma }_k}^{2\alpha }\left[\frac{sin2y}{2}+\frac{sh2y}{2i}\right]=0
\end{equation}
It is easy to see that the right side of (3.16) has the form $K=iK$, where $K$ is real and
\[
K=-4y^4\frac{sin2y}{2}-4y^3\left({\gamma }_k-4y^4\right) {{\gamma }_k}^{\alpha }\left[\frac{cos2y}{2}+\frac{ch2y}{2}\right]+2\left({\gamma }_k-4y^4\right){{\gamma }_k}^{\alpha }y\left[\frac{cos2y}{2}-\frac{ch2y}{2}
\right]+
\]
\[+4y^4\frac{sh2y}{2}+{\left({\gamma }_k-4y^4\right)}^2{{\gamma }_k}^{2\alpha }\frac{sh2y}{2}-{\left({\gamma }_k-4y^4\right)}^2{{\gamma }_k}^{2\alpha }\frac{sin2y}{2}
\]
Equation (3.16) has roots if and only if
\begin{equation} 
K=0.
\end{equation}
But for large values of $y$ (3.17) is equivalent to
\[4y^7{{\gamma }_k}^{\alpha }e^{2y}+y^7{{\gamma }_k}^{\alpha }e^{2y}+8y^8{{\gamma }_k}^{2\alpha }e^{2y}+4y^7{{\gamma }_k}^{\alpha }e^{2y}+y^5{{\gamma }_k}^{\alpha }e^{2y}
\]
which has no positive roots. By Descarte's rule of signs (3.16)  has  small in modulus roots, but they don't change asymptotics of eigenvalues.

Hence, we get the next theorem
\begin{theorem} The algebraic multiplicity of  eigenvalues ${\lambda }_{k,j}$ of the operator $L^0_1$ is two and the following  asymptotic  formula is true:
\begin{equation} \label{GrindEQ__2_18_}
{\lambda }_{k,j}={\gamma }_k+z^4_{k,j},   z_{k,j}\sim \left\{ \begin{array}{c}
\pi j+\frac{\pi }{4}+O\left(\frac{1}{k}\right), \\
i\left(\pi j+\frac{\pi }{4}+O\left(\frac{1}{k}\right)\right) \end{array}
\right.
\end{equation}
and
${\lambda }_{k,j}={\gamma }_k+{\eta }^4_{k,j}$, where ${\eta }_{k,j}$ are  small in modulus roots  of (3.15).

Analogous to [5,8,9] the following statements  might be justified:
\end{theorem}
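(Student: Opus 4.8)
The plan is to extract the spectrum of $L^0_1$ from the scalar characteristic equation and then to analyze the asymptotic location of its roots. By the reduction already carried out, diagonalizing $A$ as $A=\sum_k\gamma_k(\cdot,\varphi_k)\varphi_k$ turns the vector problem (3.1)--(3.3) into a family of scalar problems indexed by $k$: a number $\lambda$ is an eigenvalue of $L^0_1$ exactly when, for some $k$, the $2\times2$ determinant $\Delta(z)$ of (3.14) vanishes with $z=\sqrt[4]{\lambda-\gamma_k}$, i.e.\ when $z$ solves the transcendental equation (3.15). Consequently the whole spectrum is $\lambda_{k,j}=\gamma_k+z_{k,j}^4$, with the $z_{k,j}$ enumerating, for each fixed $k$, the admissible roots of (3.15); reality of $\lambda$ confines these roots to the real axis, the imaginary axis, or the rays $\arg z=\pm\tfrac{\pi}{4}$, as noted above.

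First I would isolate the leading family, the real roots. For real $z$ with $|z|\to\infty$ one has $\operatorname{th} z\to1$, and comparing the highest powers of $z$ in the numerator and denominator of the right-hand side of (3.15) shows that in both the dominant term is $-(z^4+\gamma_k)^2\gamma_k^{2\alpha}$; hence the quotient tends to $1$ and (3.15) becomes $\operatorname{tg} z=1+o(1)$. This yields $z_{k,j}\sim\pi j+\tfrac{\pi}{4}$, and retaining the next, $\gamma_k$-dependent terms of the quotient (which enter through factors $\gamma_k^{-\alpha}$) produces the correction of order $O(1/k)$ recorded in (3.18). The constraint $0<\alpha<\tfrac12$ is precisely what keeps these corrections subordinate to the leading term, in agreement with the compactness hypotheses of Theorem 2.3.

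Next I would treat the remaining root families and settle the multiplicity. Substituting $iz$ for $z$ leaves (3.15) invariant, so every real root $z$ is shadowed by a purely imaginary root $iz$; since $(iz)^4=z^4$, the two produce the same value $\lambda=z^4+\gamma_k$, and the eigenfunctions they determine both lie in the two-dimensional solution space $\mathrm{span}\{\operatorname{sh}(zt)\varphi_k,\ \sin(zt)\varphi_k\}$ cut out by (3.9) together with the conditions at $t=0$. This pairing is the source of the algebraic multiplicity two. For the oblique roots $z=y\pm iy$ I would insert them into (3.15), separate real and imaginary parts to collapse solvability to the single real condition $K=0$ of (3.17), and then read off from the sign pattern of the resulting exponential polynomial that, for large $y$, all coefficients are positive; by Descartes' rule of signs there are no large roots, only finitely many small-modulus ones $\eta_{k,j}$, which contribute the extra eigenvalues $\lambda_{k,j}=\gamma_k+\eta_{k,j}^4$ without perturbing the leading asymptotic string.

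The main obstacle is the uniformity of these asymptotics in the double index. On one hand, the error $O(1/k)$ in (3.18) must be shown to hold uniformly in $j$ as $|z|\to\infty$, which demands careful bookkeeping of how the factors $\gamma_k^\alpha$ and $\gamma_k^{2\alpha}$ compete with the powers of $z$ in the quotient of (3.15); the borderline growth permitted by $\alpha<\tfrac12$ is exactly where this estimate is most delicate. On the other hand, one must confirm that the algebraic multiplicity is exactly two and no larger: this means verifying that the real/imaginary root pair exhausts the eigenspace attached to a given $\lambda_{k,j}$ and that distinct indices $k$ do not accidentally produce the same $\lambda$, a check that combines the self-adjointness of $L^0_1$ (Theorem 2.2) with the explicit form (3.9) of the eigenfunctions.
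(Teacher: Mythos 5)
Your proposal follows essentially the same route as the paper: reduce to the scalar characteristic equation (3.15) via the eigenbasis of $A$, obtain the real-root asymptotics $z_{k,j}\sim\pi j+\frac{\pi}{4}+O(1/k)$ from the limit of the quotient as $|z|\to\infty$, use the invariance of (3.15) under $z\mapsto iz$ together with $\lambda=z^4+\gamma_k$ to get multiplicity two, and dispose of the oblique roots $y\pm iy$ through the condition $K=0$ and Descartes' rule of signs, leaving only the small-modulus roots $\eta_{k,j}$. The uniformity and exact-multiplicity caveats you raise are legitimate points the paper also leaves implicit, but your argument is correct and matches the paper's proof.
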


\textbf{Lemma 3.1.} \textit{For distribution function $N\left(\lambda \right)=\sum_{{\lambda }_{n<\lambda }}{1}$ of the eigenvalues of the operator $L^0_1$ the following relation is valid}

\begin{equation} 
N(\lambda )\sim C_1{\lambda }^{\frac{4+\alpha }{4\alpha }}
\end{equation}
\textit{for sufficiently large $\lambda $ .}

\begin{lemma}
For large  values of $n$ the following asymptotic formula is true
\begin{equation} 
{\lambda }_n\sim C_2n^{\frac{4\alpha }{4+\alpha }} , n\to \infty .
\end{equation}
Setting  $L_1=L^0_1+Q$,  one can easily see that hypotheses of
Theorem 2.2 and Theorem 2.3 hold also for $L_1\ $. Denote the eigenvalues of $L_1$ by $\left\{{\mu }_n\right\}:\ {\mu }_1<{\mu }_2<\dots \ \ .$
\end{lemma}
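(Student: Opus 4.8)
The plan is simply to invert the counting-function asymptotics of Lemma 3.1. Since $L_1$ has a purely discrete spectrum, I would enumerate its eigenvalues with multiplicity as $\lambda_1\le\lambda_2\le\cdots\to\infty$, so that the distribution function $N(\lambda)=\sum_{\lambda_n<\lambda}1$ is a non-decreasing step function tending to $+\infty$. The elementary fact I would exploit is the sandwich
\[
N(\lambda_n)\le n-1<n\le N(\lambda_n+0)\le N(\lambda_n)+2,
\]
valid for every $n$, where the last inequality records that the jump of $N$ at $\lambda_n$ equals the multiplicity of $\lambda_n$, which by Theorem 3.1 is exactly $2$. Consequently $N(\lambda_n)=n+O(1)$, and in particular $N(\lambda_n)\sim n$ as $n\to\infty$.

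First I would note that the exponent $p:=\frac{4+\alpha}{4\alpha}$ is strictly positive (in fact $p>1$), since $0<\alpha<\frac12$, so $\lambda^{p}\to\infty$; together with Lemma 3.1 this already forces $\lambda_n\to\infty$. Substituting $\lambda=\lambda_n$ into $N(\lambda)\sim C_1\lambda^{p}$ and combining with $N(\lambda_n)\sim n$ gives
\[
n\sim C_1\,\lambda_n^{\,p},\qquad n\to\infty,
\]
and solving for $\lambda_n$ yields
\[
\lambda_n\sim\Bigl(\tfrac{n}{C_1}\Bigr)^{1/p}=C_2\,n^{\frac{4\alpha}{4+\alpha}},\qquad C_2:=C_1^{-\frac{4\alpha}{4+\alpha}},
\]
which is the assertion, since $1/p=\frac{4\alpha}{4+\alpha}$.

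The computation is routine; the only point that genuinely deserves care — and hence the step I expect to be the main obstacle — is making the inversion of the asymptotic equivalence $n\sim C_1\lambda_n^{p}$ into $\lambda_n\sim(n/C_1)^{1/p}$ rigorous rather than merely formal. To do this I would run the standard $\varepsilon$-argument: fix $\varepsilon>0$, use the two-sided bounds $(C_1-\varepsilon)\lambda^{p}\le N(\lambda)\le(C_1+\varepsilon)\lambda^{p}$ for all large $\lambda$ coming from Lemma 3.1, and feed them through the sandwich above, so that the bounded $O(1)$ multiplicity correction is absorbed once $n$ is large. Because $t\mapsto t^{1/p}$ is continuous and strictly increasing for $p>0$, applying it preserves the asymptotic equivalence of quantities tending to infinity, which delivers the claimed power law with $C_2=C_1^{-1/p}$.
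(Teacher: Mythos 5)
Your proposal is correct: the paper gives no explicit argument for this lemma (it defers to the analogous derivations in [5,8,9]), and the intended route is exactly the one you take, namely inverting the counting-function asymptotics $N(\lambda)\sim C_1\lambda^{\frac{4+\alpha}{4\alpha}}$ of Lemma 3.1 after observing $N(\lambda_n)=n+O(1)$ via the bounded (here two-fold, by Theorem 3.1) multiplicities. Your $\varepsilon$-argument for making the inversion rigorous is sound and supplies detail the paper omits.
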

\textbf{Note 3.1} From (3.19) it follows that inverse of  $L^0_1$ is from  Neumann von Shcetten class ${\sigma }_p$, if and only if $p\cdot \frac{4\alpha }{4+\alpha }>1$ or $\alpha >\frac{4}{4p-1}$, which means that $A^{-\frac{1}{4}}\in {\sigma }_{4p-1}$.
\begin{lemma}
The operator ${\left(L^0_1\right)}^{-1}$  is from ${\sigma }_p$ if and only if  $A^{-\frac{1}{4}}\in {\sigma }_{4p-1}.$

Because of relation (2.22) that statement holds also for the operator $L_1$.
\end{lemma}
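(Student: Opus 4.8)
The plan is to reduce the Schatten-class membership of $(L^0_1)^{-1}$ to a summability condition on the eigenvalues and then to evaluate that condition by exploiting the two-index description of the spectrum from Theorem 3.1. By Theorems 2.2 and 2.3 the operator $L^0_1$ is self-adjoint with purely discrete spectrum, and by Theorem 3.1 its eigenvalues are $\lambda_{k,j}=\gamma_k+z^4_{k,j}$, of algebraic multiplicity $2$, tending to $+\infty$ since $\gamma_k>1$ and $z^4_{k,j}\ge 0$ along the real roots. Hence $(L^0_1)^{-1}$ is a compact, positive, self-adjoint operator whose singular numbers are exactly $\{\lambda_{k,j}^{-1}\}$, so that
\[
(L^0_1)^{-1}\in\sigma_p \iff \sum_{k,j}\lambda_{k,j}^{-p}<\infty ,
\]
and the whole problem becomes the estimation of this double series.

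First I would carry out the summation in $j$ for fixed $k$. From (3.18) one has $z_{k,j}\sim \pi j+\tfrac{\pi}{4}$, so $\lambda_{k,j}\asymp \gamma_k+(\pi j)^4$ uniformly in $j$, and the inner sum is comparable to $\sum_{j\ge 0}\bigl(\gamma_k+(\pi j)^4\bigr)^{-p}$. Comparing this with $\int_0^\infty\bigl(\gamma_k+(\pi x)^4\bigr)^{-p}dx$ and substituting $\pi x=\gamma_k^{1/4}u$ gives, for $p>\tfrac14$,
\[
\sum_{j\ge 0}\bigl(\gamma_k+(\pi j)^4\bigr)^{-p}\asymp \gamma_k^{\frac14-p}\int_0^\infty(1+u^4)^{-p}\,du = C_p\,\gamma_k^{-\frac{4p-1}{4}},
\]
with $C_p=\int_0^\infty(1+u^4)^{-p}du<\infty$ independent of $k$. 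Summing over $k$ then yields
\[
\sum_{k,j}\lambda_{k,j}^{-p}\asymp C_p\sum_{k}\gamma_k^{-\frac{4p-1}{4}} .
\]
Since $A>I$ has eigenvalues $\gamma_k$, the operator $A^{-1/4}$ has singular numbers $\gamma_k^{-1/4}$, so that $\sum_k\gamma_k^{-\frac{4p-1}{4}}=\sum_k\bigl(\gamma_k^{-1/4}\bigr)^{4p-1}$ is finite precisely when $A^{-1/4}\in\sigma_{4p-1}$. Combining the three displays proves the asserted equivalence, and it does so for arbitrary $A$, without requiring any prescribed growth rate for $\gamma_k$.

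The main obstacle is ensuring that the constants in the comparison are genuinely uniform in $k$, since the asymptotics in (3.18) carry an $O(1/k)$ error and there are finitely many small roots $\eta_{k,j}$ of (3.15). I would control this by splitting the $j$-sum at $(\pi j)^4=\gamma_k$: in the range $(\pi j)^4\le\gamma_k$ each of the $\asymp\gamma_k^{1/4}$ terms is $\asymp\gamma_k^{-p}$, while in the complementary range the quartic term dominates and the tail is handled by the same integral comparison; the small roots contribute only boundedly many terms per $k$ and are absorbed into the constant. This furnishes two-sided bounds with $k$-independent constants and completes the estimate. An alternative, shorter route — the one implicit in Note 3.1 — is to invoke Lemma 3.2: the asymptotics $\lambda_n\sim C_2 n^{4\alpha/(4+\alpha)}$ give $(L^0_1)^{-1}\in\sigma_p\iff \tfrac{4\alpha p}{4+\alpha}>1\iff\alpha>\tfrac{4}{4p-1}$, which for the choice $Q_1=Q_2=A^\alpha$ coincides with $A^{-1/4}\in\sigma_{4p-1}$. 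Finally, the statement for the perturbed operator $L_1=L^0_1+Q$ is immediate from the resolvent identity (2.22): as $Q$ is bounded, $R_\lambda(L_1)$ and $R_\lambda(L^0_1)$ belong to the same Schatten class, whence $(L_1)^{-1}\in\sigma_p$ if and only if $(L^0_1)^{-1}\in\sigma_p$.
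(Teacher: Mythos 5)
Your proof is correct, but your primary argument takes a genuinely different (and in one respect stronger) route than the paper's. The paper obtains the lemma as an immediate consequence of Note 3.1, i.e. of the single-index asymptotics $\lambda_n\sim C_2 n^{\frac{4\alpha}{4+\alpha}}$ of Lemma 3.2 (derived from the counting-function asymptotics of Lemma 3.1): one gets $(L^0_1)^{-1}\in\sigma_p$ if and only if $p\cdot\frac{4\alpha}{4+\alpha}>1$, i.e. $\alpha>\frac{4}{4p-1}$, and this is then identified with $A^{-\frac14}\in\sigma_{4p-1}$ --- an identification which, like Lemma 3.1 itself, tacitly presupposes power-like growth $\gamma_k\asymp k^{\alpha}$ of the eigenvalues of $A$. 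You instead estimate the double series $\sum_{k,j}\lambda_{k,j}^{-p}$ directly: summing in $j$ for fixed $k$ gives two-sided bounds $\asymp\gamma_k^{-\frac{4p-1}{4}}$ with $k$-independent constants, and $\sum_k\gamma_k^{-\frac{4p-1}{4}}<\infty$ is precisely the definition of $A^{-\frac14}\in\sigma_{4p-1}$. This bypasses Lemmas 3.1 and 3.2 entirely and requires no regularity of $\gamma_k$ beyond compactness of $A^{-1}$; the price is the uniformity check, which you handle correctly by splitting the $j$-sum at $(\pi j)^4=\gamma_k$ and absorbing the finitely many small roots $\eta_{k,j}$ into the constants. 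You also correctly flag the paper's shorter route as the alternative, and your treatment of the perturbed operator $L_1$ via the boundedness of $Q$ and relation (2.22) coincides with the paper's.
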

 Now we can prove the next theorem.

\textbf{Theorem 3.2} \textit{Let $A^{-\frac{1}{4}}\in {\sigma }_{4p-1}$. Then $R_{\lambda }\left(L^0_s\right),\ R_{\lambda }\left(L_s\right)\epsilon {\sigma }_p\left(H_1\right) $ if and only if \linebreak $cosC,\ {Q}_1A^{-\frac{3}{4}},{Q}_2A^{-\frac{1}{2}}$ are from ${\sigma }_p.$}

\begin{proof}
Setting in formula  (2.21)   $C=(C_1,C_2)$, $C_1=\frac{\pi }{2}I$, $C_2=arctg(-\sqrt{2}A)$ and  subtracting obtained by that way formula from (2.21)  we get
\begin{equation} 
R_{\lambda }\left(L^s_0\right) \tilde{h}=R_{\lambda }\left(L_0\right) \tilde{h}+\ JD^{-1}\left(\widetilde{B}_1-\widetilde{B}_2\right)\tilde{H}-J{D_0}^{-1}\left(\widetilde{B}_{01}-\widetilde{B}_{02}\right)\tilde{H}
\end{equation}
where $D_{0}$ and ${\widetilde{B}_{01}-\widetilde{B}_{02}}_0$ are obtained from $D$ and  $\widetilde{B}_1-\widetilde{B}_2$ by taking there $C=(C_1,C_2)$, $C_1=\frac{\pi }{2}I$, $C_2=arctg(-\sqrt{2}A)$.  Writing  (3.21) in the open form and with lemma 3.3  in mind one can  easily see that $R_{\lambda }\left(L^0_s\right)$ is  from ${\sigma }_p$ if and only if  $cosC,\ {Q}_1A^{-\frac{3}{4}},{\ \ Q}_2A^{-\frac{1}{2}}$ are from ${\sigma }_p.$
Since $Q$ is bounded in $H_1$, statement of theorem is true also for $R_\lambda (L_s)$.
\end{proof}
\textbf{Theorem 3.3.}
\textit{If the inverse of the operator $A$ is compact, then for any closed set $F$ of the real axis, there exists a selfadjoint extension of the minimal operator , whose spectrum coincides with  $F.$}

\begin{proof} Let $C=\left( \begin{array}{cc}
\frac{\pi }{2}I & O \\
O & f(A) \end{array}
\right)$, where $f(\mu )$ is any function Borel measurable on ($1,\infty $). Then boundary conditions (2.7) take on the form
\begin{equation} 
y\left(0\right)=0,
\end{equation}
\begin{equation} 
cosf\left(A\right)A^{\frac{3}{8}}\left(y^{''}(0)-\sqrt{2}A^{\frac{1}{4}}y'(0)+A^{\frac{1}{2}}y(0)\right)-sinf\left(A\right)A^{-\frac{3}{8}}y^{'}\left(0\right)=0
\end{equation}
Let us corresponding to $f$ selfadjoint extension  denote by $L_f.$ Obviously, $\lambda $ is an  eigenvalue of $L_f$ , if in addition to (3.22) and (3.23) holds (3.3). After  substituting $y(t,\lambda )$ from (3.4) into those relations and denoting by $K^f_{\lambda }(A)$ the determinant of the matrix formed by the coefficients of $f_1,\ f_2,\ g_1,\ g_2$ in relations (3.22), (3.23),(3.3), the further justifications are lead  as in Theorem 5 from [3].
\end{proof}

\section{ Orthonormal eigenvectors of operator $L^{0}_{1}$}

 Reminding that by $\left\{{\varphi }_{\kappa }\right\}$ we denote orthonormal eigenvectors of the operator  $A$,  then  orthogonal eigenvectors of $L^0_1$  will be as 
 \[
Y_{k,j}=\left\{c_{1k,j}sh\sqrt[4]{{\lambda }_{k,j}-{\gamma }_k}t{\varphi }_k+\ c_{2k,j}sin\sqrt[4]{{\lambda }_{k,j}-{\gamma }_k}t{\varphi }_{k },c_{1k,j}{{\gamma }_k}^{\alpha }sh\sqrt[4]{{\lambda }_{k,j}-{\gamma }_k}{\varphi }_k+\right.
\]\[
+c_{2k,j}\ {{\gamma }_k}^{\alpha }sin\sqrt[4]{{\lambda }_{k,j}-{\gamma }_k}{\varphi }_{k\ }, c_{1k,j}\sqrt[4]{{\lambda }_{k,j}-{\gamma }_k}{{\gamma }_k}^{\alpha }ch\sqrt[4]{{\lambda }_{k,j}-{\gamma }_k}{\varphi }_k +\]
\begin{equation}
  \left.+c_{2k,j}\ {{\sqrt[4]{{\lambda }_{k,j}-{\gamma }_k}\gamma }_k}^{\alpha }cos\sqrt[4]{{\lambda }_{k,j}-{\gamma }_k}{\varphi }_{k\ }\right\},\ k=\overline{1,\infty },\ j=\overline{1,\infty }.
\end{equation} 

Coefficients $c_{1k,j}$ and $c_{2k,j}$ are the values of $c_{1k}$ and $c_{2k}$ in relations (3.12), (3.13) obtained by taking  in (3.11) ${\lambda =\lambda }_{k,j}$. From (3.13)
\begin{equation} 
{\ c}_{1k}=\frac{zsinz+\left(z^4+{\gamma }_k\right)cosz\ {\gamma }^{\alpha }_k}{\ \ \ zshz-\left(z^4+{\gamma }_k\right)chz\ {\gamma }^{\alpha }_k }c_{2k}
\end{equation}
The multiplier at $c_{2k}$ we denote by $H(z):$
\begin{equation} 
c_{1k}=H(z)c_{2k}
\end{equation}
Thus, 
\begin{equation}
 c_{1k,j}=H(z_{k,j})c_{2k,j} \end{equation}
 Assuming for shortcut of notations $c_{2k,j}=c_{k,j}$  with all above in mind we have the following expressions for $Y_{k,j}:$
\[
Y_{k,j}=c_{k,j}\left\{\sin z_{k,j}t{\varphi }_{k\ }+H(z_{k,j})shz_{k,j}t{\varphi }_k\ ,\gamma _{k}^{\alpha }sinz_{k,j}{\varphi }_{k}+\right.
\]
\[
\left.+H(z_{k,j})\gamma_k^{\alpha }shz_{k,jk}\varphi_k,\; z_{k,j}\gamma_k^{\alpha}\cos z_{k,j}\varphi _{k}
+H(z_{k,j})z_{k,j}\gamma_k^{\alpha }chz_{k,j}\varphi_{k} \right \}=
\]
\[
=c_{k,j}{\varphi }_{k}\left\{\sin z_{k,j}t+H\left(z_{k,j}\right)sh z_{k,j}t,\gamma_{k}^{\alpha }\sin z_{k,j}+H(x_{k,j})\gamma_k^{\alpha}sh z_{k,j},  z_{k,j}\gamma_k^{\alpha }\cos z_{k,j}+\right.
\]
\begin{equation} 
\left.+H(z_{k,j})z_{k,j}\gamma _k^{\alpha }chz_{k,j}\ \right\},\ k=\overline{1,\infty },\ j=\overline{1,\infty }
\end{equation}
Introduce the  direct sum space $\Lambda =L_2\left(\left(0,1\right)\right)\oplus C^2$, ($C$ is a complex\ space ) with a scalar product of elements $u=\left(u\left(t\right),u_1,u_2\right),v=(v\left(t\right),v_1,v_2)$ defined as
\[\left(u,v\right)_{\Lambda}=\int\limits^{1}_{0} u\overline{v}dt+\gamma^{\alpha}_{k} u\left(1\right)\overline{v}\left(1\right)+\gamma^{\alpha }_k u^{'}\left(1\right)\hat{v}^{'}\left(1\right)\]

Thus,
\begin{equation} 
Y_{k,j}=c_{k,j}\Psi_{k,j}{\varphi }_{k}
\end{equation} 
where by  $\Psi_{k,j}\in \Lambda$  we denote the vector in parenthesis in the right side of (4.5). 

Set
\begin{equation} 
\Phi_{k,j}=\Psi _{k,j}\varphi_{k},
\end{equation} 
 obviously  $\Phi_{k,j} \in H_1$. $Y_{k,j}$ will form an orthonormal system of eigenvectors by putting in place of $c_{k,j}$ the norming constants  :
\begin{equation} 
{c_{k,j}}^2=\frac{1}{{{\left\|\Phi _{k,j}\right\|}^2}_1}=\frac{1}{{{\left\|{\mathrm{\Psi }}_{k,j}\right\|}^2}_{\Lambda}},
\end{equation} 
where:
\[\left\|\Phi_{k,j}\right\|^{2}_{1}=\left[\int^{1}_{0}\sin^{2} z_{k,j}tdt+\ H(z_{k,j})^2\int^{1}_{0}sh^2 z_{k,j}tdt+\right.
\]
\[
\left.+2H(z_{k,j})\int^{1}_{0}sh z_{k,j}t sin z_{k,j}tdt+H(z_{k,j})^2{\gamma _k}^{2\alpha}sh^2z_{k,j}\right]+
\]
\[ 
+2H(z_{k,j})\gamma _{k}^{2\alpha }shz_{k,j}sinz_{k,j}+{{\gamma }_k}^{2\alpha }\sin^{2}z_{k,j}+H(z_{k,j})^2\gamma_k^{2\alpha }{z_{k,j}}^2ch^2z_{k,j}+
\]
\begin{equation}
+2{z_{k,j}}^2{{\gamma }_k}^{2\alpha }H(z_{k,j})chz_{k,j}cosz_{k,j}+{{\gamma }_k}^{2\alpha }{z_{k,j}}^2{cos}^2z_{k,j}
\end{equation}
Here ${\left({\varphi }_k,{\varphi }_k\right)}_H=1$ was used.

\section{Some important relations}

 Recalling  that ${\gamma }_1\le {\gamma }_2\le \dots $ are  eigenvalues and $\left\{{\varphi }_k\right\},\ k=\overline{1,\infty }$    are orthonormal  eigenvectors of the operator $A$, in virtue of expansion theorem any $y(t)$ from $L_2\left(H,(0,1)\right)$ is expanded as
\[y\left(t\right)=\sum^{\infty }_{k=1}{\left(y\left(t\right){\varphi }_k,{\varphi }_k\ \right){\varphi }_k}\]
Denoting $(y\left(t\right){\varphi }_k,{\varphi }_k)=y_k(t)$, from (3.1)-(3.2)  we have the following spectral problem for scalar functions $y_k\left(t\right):$
\begin{equation} 
{l_ky_k\left(t\right)\equiv y}^{IV}_k\left(t\right)+{\gamma }_ky_k\left(t\right)=\lambda y_k\left(t\right)
\end{equation}
\begin{equation} 
y_k\left(0\right)=y_k^{''}\left(0\right)=0
\end{equation}
\begin{equation} 
{-y}_k^{'''}\left(1\right)=\lambda {\gamma }^{\alpha }_ky_k\left(1\right)
\end{equation}
\begin{equation} 
y_k^{''}\left(1\right)=\lambda {\gamma }^{\alpha }_ky_k^{'}\left(1\right)
\end{equation}
For each fixed  $k \left(k=1,\infty \right)$ denote the eigenvalues of that problem  by ${\lambda }_{k,j}$, and solutions of (5.1)  by $y_k\left(t,\lambda -{\gamma }_k\right).$ Obviously, ${\lambda }_{k,j},\ k=1,\infty ,\ j=1,\infty $ are the eigenvalues of problem (3.1)-(3.3) 

It is easy to see that vectors  $\left(y_{k,j}\left(t,{\lambda }_{k,j}-{\gamma }_k\right),{\gamma }^{\alpha}_ky_{k,j}\left(1,{\lambda }_{k,j}-{\gamma }_k\right),\ {\gamma }^{\alpha }_ky^{'}_{k,j}(1,{\lambda }_{k,j}-{\gamma }_k)\right)$,  form a set  of orthogonal  eigenvectors  of the  operator $L_{0k}$ associated with the scalar problem (5.1)-(5.4)  in space $\Lambda $ and acting as $L_{0k}\left(y_k\left(t,\lambda -{\gamma }_k\right),{\gamma }^{\alpha }_ky_k\left(1,\lambda -{\gamma }_k\right),\ {\gamma }^{\alpha }_ky^{'}_k(1,\lambda -{\gamma }_k)\right)=$ $=(l_ky_k(t,\lambda -{\gamma }_k), {-y}_k^{'''}\left(1,\lambda-\gamma_k \right),\ y_k^{''}\left(1,\lambda -\gamma_k\right))$

 Obviously, it coincide, with  $\Psi_{k,j}:$
\[\left(y_{k,j}\left(t,{\lambda }_{k,j}-{\gamma }_k\right),{\gamma }^{\alpha }_ky_{k,j}\left(1,{\lambda }_{k,j}-{\gamma }_k\right),\ {\gamma }^{\alpha }_ky^{'}_{k,j}(1,{\lambda }_{k,j}-{\gamma }_k)\right)=\Psi _{k,j}.
\]
  $\Upsilon_{k,j}=c_{k,j}\Psi_{k,j}$    and $Y_{k,j}=c_{k,j}\Phi_{k,j}$  are orthonormal eigenvectors of  problems and (5.1)-(5.4) and (3.1)-(3.3),  respectively.

Introduce the notations
\[ 
{\omega }_1(z)\equiv y_k^{'''}\left(1,\lambda -{\gamma }_k\right)+\lambda \gamma^{\alpha }_ky_k\left(1,\lambda -{\gamma }_k\right)=
\]
\begin{equation}
=y_k^{'''}\left(1,z^4\right)+{\left(z^4+{\gamma }_k\right)\gamma }^{\alpha }_ky_k\left(1,z^4\right)
\end{equation}
\[
{\omega }_2\left(z\right)\equiv y^{''}_k\left(1,\lambda -{\gamma }_k\right)-\lambda{\gamma }^{\alpha }_ky_k'\left(1,\lambda -{\gamma }_k\right)=
\]
\begin{equation} 
=y^{''}_k\left(1,z^4\right)-\left(z^4+{\gamma }_k\right){\gamma }^{\alpha }_ky_k'\left(1,z^4\right)
\end{equation}
The eigenvalues of (5.1)-(5.4) are defined from the system 
\begin{equation}
{\omega }_1\left(z\right)=0 
\end{equation}
\begin{equation}
{\omega }_2\left(z\right)=0 
\end{equation}
|Introduce the  following function:
\begin{equation} 
f_k\left(z\right)\equiv {y_k}^2\left(1,z^4\right)\left[\frac{{\omega }_1(z)}{y_k\left(1,z^4\right)}\right]-\ {y^{'}_k}^2(1,z^4)\left[\frac{{\omega }_2(z)}{y_k^{'}\left(1,z^4\right)}\right]
\end{equation}
Prove the next theorem which has an important role in deriving the trace formula.

 \begin{theorem}
  \[f^{'}_k\left(z_{k,j}\right)=4z^3_{k,j}{\left\|\Upsilon _{k,j}\right\|}^2=4z^3_{k,j}\frac{1}{c^2_{k,j}}\]
where $c^2_{k,j}$ are norming constants.
\end{theorem}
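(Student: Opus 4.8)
The plan is to reduce the whole computation to the Lagrange (Green) identity for the scalar operator $D^4+\gamma_k$, exploiting that the spectral parameter appears in the right-endpoint conditions (5.3)--(5.4). First I would simplify the defining expression (5.9): the bracketed quotients cancel, so that
\[
f_k(z)=y_k(1,z^4)\,\omega_1(z)-y_k'(1,z^4)\,\omega_2(z),
\]
where $y_k'$ denotes the $t$-derivative. Differentiating in $z$ and evaluating at a root $z_{k,j}$ of the system (5.7)--(5.8), the two terms in which $\omega_1,\omega_2$ survive undifferentiated drop out because $\omega_1(z_{k,j})=\omega_2(z_{k,j})=0$, leaving $f_k'(z_{k,j})=y_k(1)\,\omega_1'(z_{k,j})-y_k'(1)\,\omega_2'(z_{k,j})$, all quantities taken at $t=1$, $z=z_{k,j}$.

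Next I would expand $\omega_1',\omega_2'$ from (5.5)--(5.6) by the chain rule. Since $y_k$ depends on $z$ only through $\lambda=z^4+\gamma_k$, every $z$-derivative carries the factor $d\lambda/dz=4z^3$ times a $\lambda$-derivative $\dot{y}_k:=\partial_\lambda y_k$, while the explicit factors $(z^4+\gamma_k)\gamma_k^\alpha$ contribute the extra boundary terms. Writing $\lambda_{k,j}=z_{k,j}^4+\gamma_k$ and collecting,
\[
f_k'(z_{k,j})=4z_{k,j}^3\Big\{y_k(1)\dot{y}_k'''(1)-y_k'(1)\dot{y}_k''(1)+\gamma_k^\alpha\big(y_k(1)^2+y_k'(1)^2\big)+\lambda_{k,j}\gamma_k^\alpha\big(y_k(1)\dot{y}_k(1)+y_k'(1)\dot{y}_k'(1)\big)\Big\}.
\]

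The heart of the argument is to identify the first pair of boundary terms together with the last $\lambda_{k,j}\gamma_k^\alpha$-group with $\int_0^1 y_k^2\,dt$. For this I would invoke the Lagrange identity for $u=y_k(\cdot,\lambda)$, $v=y_k(\cdot,\mu)$, namely
\[
(\lambda-\mu)\int_0^1 uv\,dt=\big[vu'''-v'u''+v''u'-v'''u\big]_0^1.
\]
The left conditions (5.2), $y(0)=y''(0)=0$, do not involve $\lambda$, so the concomitant vanishes at $t=0$ identically in $\mu$; differentiating in $\mu$ and letting $\mu\to\lambda$ yields
\[
\int_0^1 y_k^2\,dt=y_k(1)\dot{y}_k'''(1)-y_k'(1)\dot{y}_k''(1)+\big(y_k''(1)\dot{y}_k'(1)-y_k'''(1)\dot{y}_k(1)\big).
\]

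Finally I would insert the eigenvalue conditions (5.3)--(5.4), $y_k'''(1)=-\lambda_{k,j}\gamma_k^\alpha y_k(1)$ and $y_k''(1)=\lambda_{k,j}\gamma_k^\alpha y_k'(1)$, which turn the last parenthesis into exactly $\lambda_{k,j}\gamma_k^\alpha\big(y_k(1)\dot{y}_k(1)+y_k'(1)\dot{y}_k'(1)\big)$; this cancels the matching group in the displayed expression for $f_k'(z_{k,j})$. What survives is $4z_{k,j}^3\big(\int_0^1 y_k^2\,dt+\gamma_k^\alpha y_k(1)^2+\gamma_k^\alpha y_k'(1)^2\big)$, which is precisely $4z_{k,j}^3\|\Psi_{k,j}\|_\Lambda^2=4z_{k,j}^3\|\Upsilon_{k,j}\|^2$, and equals $4z_{k,j}^3/c_{k,j}^2$ by the normalization (4.8). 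The main obstacle is the bookkeeping in this cancellation: it is exactly the presence of $\lambda$ in the boundary conditions that converts the bare $L_2(0,1)$ integral into the full $\Lambda$-norm with its $\gamma_k^\alpha$-weighted endpoint pieces, so the signs and the pairing of the four boundary monomials must be tracked with care.
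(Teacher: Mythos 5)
Your proof is correct and follows essentially the same route as the paper: both arguments rest on the Lagrange (Green) identity for two solutions of (5.1) with distinct spectral parameters, a passage to the limit as the parameters coincide, and the use of the $\lambda$-dependent conditions (5.3)--(5.4) to convert the surviving boundary monomials into the $\gamma_k^{\alpha}$-weighted endpoint pieces of the $\Lambda$-norm. The only difference is organizational --- you compute $f_k'(z_{k,j})$ explicitly by the chain rule and then invoke the $\mu$-differentiated concomitant identity, whereas the paper inserts the endpoint correction terms into the integrated identity first and only afterwards recognizes the limit of the difference quotient as $f_k'(z_{k,j})$ --- and your bookkeeping of the cancellation is accurate.
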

 \begin{proof}
 Let $y_k\left(t,\lambda -{\gamma }_k\right)$ and $y_k\left(t,{\lambda }_{k,j}-{\gamma }_k\right)$ be solutions of equation (5.1) with $\lambda $ and ${\lambda }_{k,j}$, respectively:
\begin{equation} 
y^{IV}_k\left(t,\lambda -{\gamma }_k\right)+{\gamma }_ky_k\left(t,\lambda -{\gamma }_k\right)=\lambda y_k\left(t,\lambda -{\gamma }_k\right)
\end{equation}
\begin{equation} 
y^{IV}_k\left(t,{\lambda }_{k,j}-{\gamma }_k\right)+{\gamma }_ky_k\left(t,{\lambda }_{k,j}-{\gamma }_k\right)={\lambda }_{k,j}y_k\left(t,{\lambda }_{k,j}-{\gamma }_k\right)
\end{equation}
Multiply (5.10) by $y_k\left(t,{\lambda }_{k,j}-{\gamma }_k\right)$, (5.11) $y_k\left(t,\lambda -{\gamma }_k\right)$, then subtract the second one from the first, integrate both sides of obtained relation  along $(0,1)$, and  to the obtained results add the term
\[
\left(\lambda -{\gamma }_k-({\lambda }_{k,j}-{\gamma }_k\right)){{\gamma }_k}^{\alpha }y_k\left(1,\lambda -{\gamma }_k\right)y_k\left(1,{\lambda }_{k,j}-{\gamma }_k\right)+
\]
\begin{equation} 
+\left(\lambda -{\gamma }_k-({\lambda }_{k,j}-{\gamma }_k\right))y_k'\left(1,\lambda -{\gamma }_k\right)y_k'\left(1,{\lambda }_{k,j}-{\gamma }_k\right){{\gamma }_k}^{\alpha }.
\end{equation}
Recall here the notations $\sqrt[4]{\lambda -{\gamma }_k}=z$ , $\sqrt[4]{{\lambda }_{k,j}-{\gamma }_k}=z_{k,j}$ from section 3. Note that addition of the last term is needed for  finding  the norm of the eigenvector of the operator $L_{0k}$ in the direct sum space $\Lambda _{1}=L_2\left(\left(0,1\right)\right)\oplus C^2$ as it will become clear in next derivations.

 Thus,
\[ 
N\equiv \int^1_0{y_k}^{IV}\left(t,z^4\right)y_k\left(t,z^4_{k,j}\right)dt-\int^1_0{y_k}^{IV}\left(t,z^4_{k,j}\right)y_k\left(t,z^4\right)dt+
\]
\[
+\left(z^4-z^4_{k,j}\right){\gamma }^{\alpha }_ky_k\left(1,z^4\right)y_k\left(1,z^4_{k,j}\right)+\left(z^4-z^4_{k,j}\right)y_k'\left(1,z^4\right)y_k'\left(1,z^4_{k,j}\right){\gamma }^{\alpha }_k=
\]
\[
=\left(z^4-z^4_{k,j}\right)\int^1_0{y_k\left(t,z^4\right)y_k\left(t,z^4_{k,j}\right)dt+}\left(z^4-z^4_{k,j}\right){\gamma }^{\alpha }_ky_k\left(1,z^4\right)y_k\left(1,z^4_{k,j}\right)+
\]
\begin{equation}
+\left(z^4-z^4_{k,j}\right)y_k'\left(1,z^4\right)y_k'\left(1,z^4_{k,j}\right){\gamma }^{\alpha }_k
\end{equation}

 Integration by parts on the left side of that  relation  yields
\[{N=y}_k'''\left(1,z^4\right)y_k(1,z^4_{k,j})-\]
\[{-y}_k'''\left(1,z^4_{k,j}\right)y_k\left(1,z^4\right)-y_k''\left(1,z^4\right)y_k'(1,z^4_{k,j})\ +y_k''\left(1,z^4_{k,j}\right)y_k'\left(1,z^4\right)+
\]\[
+\left(z^4-z^4_{k,j}\right){{\gamma }_k}^{\alpha }y_k\left(1,z^4\right)y_k\left(1,z^4_{k,j}\right)+\left(z^4-z^4_{k,j}\right)y_k'\left(1,z^4\right)y_k'\left(1,z^4_{k,j}\right){{\gamma }_k}^{\alpha }=\]
\[
=y_k\left(1,z^4_{k,j}\right)y_k\left(1,z^4\right)\left[\frac{y^{'''}_k\left(1,z^4\right)}{y_k\left(1,z^4\right)}-\frac{y^{'''}_k\left(1,z^4_{k,j}\right)}{y_k\left(1,z^4_{k,j}\right)}+\left(z^4-z^4_{k,j}\right){{\gamma }_k}^{\alpha }\right]-
\]
\begin{equation} 
-y_k'\left(1,z^4\right)y_k'\left(1,z^4_{k,j}\right)\left[\frac{y_k''\left(1,z^4\right)}{y_k'\left(1,z^4\right)}-\frac{y_k''\left(1,z^4_{k,j}\right)}{y_k'\left(1,z^4_{k,j}\right)}\ +\left(z^4-z^4_{k,j}\right){{\gamma }_k}^{\alpha }\right]\
\end{equation}

 For our further derivations  in that relation, we  consider $y_k\left(t,\lambda -{\gamma }_k\right)$  as a  realvalued function (it is so if we   take real values of  $\lambda $ ),thus  instead of $\int^1_0{y_k\left(t,\lambda -{\gamma }_k\right)\overline{y_k\left(t,{\lambda }_{k,j}-{\gamma }_k\right)}dt}$   one might take $\int^1_0{y_k\left(t,\lambda -{\gamma }_k\right)y_k\left(t,{\lambda }_{k,j}-{\gamma }_k\right)dt}=\int^1_0{y_k\left(t,z^4\right)y_k(t,z^4_{k,j})}dt.$

 Substituting the expression for $N$ from (5.14) into (5.13) with (5.12) in mind ,  dividing both sides of (5.13) by $z-z_{k,j}$ and passing to the limit as $z\to z_{k,j}$, we get
 
\[
4z_{k,j}^{3}\left[ \int_{0}^{1}{{y_{k}(t,z_{k,j}^{4})}^{2}dt}+{\gamma }%
_{k}^{\alpha }y_{k}^{2}\left( 1,z_{k,j}^{4}\right) +{\gamma }_{k}^{\alpha
}y_{k}^{2}\left( 1,z_{k,j}^{4}\right) +{\gamma }_{k}^{\alpha }{y}%
_{k}^{2}\left( 1,z_{k,j}^{4}\right) \right] =
\]%
\[
\underset{z\rightarrow z_{k,j}}{\lim }{\left( \frac{y_{k}\left(
1,z_{k,j}^{4}\right) y_{k}\left( 1,z^{4}\right) \left[ \frac{y_{k}^{^{\prime
\prime \prime }}\left( 1,z^{4}\right) }{y_{k}\left( 1,z^{4}\right) }-\frac{%
y_{k}^{^{\prime \prime \prime }}\left( 1,z_{k,j}^{4}\right) }{y_{k}\left(
1,z_{k,j}^{4}\right) }+z^{4}{\gamma }_{k}^{\alpha }\right] }{z-z_{k,j}}%
-\right. }
\]%
\[
\left. -\frac{y_{k}^{^{\prime }}\left( 1,z_{k,j}^{4}\right) y_{k}^{\prime
}\left( 1,z^{4}\right) \left[ \frac{y_{k}^{^{\prime \prime }}\left(
1,z^{4}\right) }{y_{k}^{\prime }\left( 1,z^{4}\right) }-\frac{%
y_{k}^{^{\prime \prime }}\left( 1,z_{k,j}^{4}\right) }{y_{k}^{^{\prime
}}\left( 1,z_{k,j}^{4}\right) }+z^{4}{\gamma }_{k}^{\alpha }\right] }{%
z-z_{k,j}}\right) =
\]%
\[
=y_{k}^{2}\left( 1,z_{k,j}^{4}\right) {\left[ \frac{y_{k}^{^{\prime \prime
\prime }}\left( 1,z^{4}\right) }{y_{k}\left( 1,z^{4}\right) }\right] }%
^{^{\prime }}\left. {}\right\vert _{z=z_{k,j}}+4z_{k,j}^{3}{{\gamma }_{k}}%
^{\alpha }{y_{k}}^{2}\left( 1,z_{k,j}^{4}\right) -
\]%
\[
-y_{k}^{\prime 2}\left( 1,z_{k,j}^{4}\right) \left[ \frac{%
y_{k}^{^{\prime \prime }}\left( 1,z^{4}\right) }{y_{k}\left( 1,z^{4}\right)} %
\right] ^{^{\prime }}\left. {}\right\vert _{z=z_{k,j}}-4z_{k,j}^{3}\gamma
_{k}^{\alpha }y_{k}^{^{\prime }}{}^{2}\left( 1,z_{k,j}^{4}\right) =
\]%
\begin{equation}
=y_{k}^{2}\left( 1,z_{k,j}^{4}\right) {\left[ \frac{y_{k}^{^{\prime \prime
\prime }}\left( 1,z^{4}\right) }{y_{k}\left( 1,z^{4}\right) }+z^{4}{\gamma }%
_{k}^{\alpha }\right] }^{^{\prime }}\left. {}\right\vert _{z=z_{k,j}}-{%
y_{k}^{^{\prime }}}^{2}\left( 1,z_{k,j}^{4}\right) {\left[ \frac{%
y_{k}^{^{\prime \prime }}\left( 1,z^{4}\right) }{y_{k}^{\prime }\left(
1,z^{4}\right) }+z^{4}{\gamma }_{k}^{\alpha }\right] }^{^{\prime }}\left.
{}\right\vert _{z=z_{k,j}}
\end{equation}
(derivatives  of expressions within square bracket  in the last relation  are taken with respect to $z$)

Note that $\int\limits^{1}_{0}y_k(t,z^{4}_{k,j})^{2}dt+\gamma_{k}^{\alpha}y_{k}^{2}(1,z^{4}_{k,j})+\gamma_{k}^{\alpha }y^{'2}_{k}(1,z^{4}_{k,j})$
 standing on the left side of (5.15) is square of the norm of  eigenvectors of an operator associated with problem (5.1)-(5.4) in $\Lambda$:
\begin{equation}
\int_{0}^{1}y_{k}(t,z_{k,j}^{4})^{2}dt+\gamma _{k}^{\alpha }{y_{k}}%
^{2}(1,z_{k,j}^{4})+\gamma _{k}^{\alpha }{y}_{k}^{2}(1,z_{k,j}^{4})^{^{
\prime }}=\left\Vert \Psi _{k,j}\right\Vert _{\Lambda }^{2}
\end{equation}
Using (5.16)  on the left side of (5.15)  and notations (5.5), (5.6), we arrive at
\begin{equation}
4z_{k,j}^{3}\left\Vert \Psi _{k,j}\right\Vert _{\Lambda
}^{2}=y_{k}^{2}\left( 1,{z^{4}}_{k,j}\right) {\left[ {\omega }_{1}(z)%
\right] ^{^{\prime }}}\left. {}\right\vert _{{_{z=z_{k,j}}}}{-y}_{k}^{2}{(1,{%
z^{4}}_{k,j})\left[ {\omega }_{2}\left( z\right) \right] }^{^{\prime
}}\left. \right\vert _{{_{z=z_{k,j}}}}
\end{equation}
 Obviously, the solution of problem (5.1) satisfying (5.2) is
\[y_k\left(t,\lambda -{\gamma }_k\right)=c_{1k}sh\sqrt[4]{\lambda -{\gamma }_k}t+c_{2k}sin\sqrt[4]{\lambda -{\gamma }_k}t \]
or
\begin{equation} 
y_k\left(t,z^4\right)=c_{1k}shzt+c_{2k}sinzt
\end{equation}
For this function to be the first component of eigenvector ${\mathrm{\Psi }}_{k,j}$ of problem (5.1)-(5.4), the function  $y_k\left(t,z^4\right)$ must satisfy also (5.3) and (5.4) or equivalently (5.7), (5.8).  Substituting it into  (5.15), we get again (3.11), (3.12) .

 Obviously, writing $y_k\left(t,z^4\right)$from (5.8) with $c_{1k}$ defined from  (4.2)  into (5.4)  yields (3.11) from which ${\lambda }_{k,j}$ are found as  ${\lambda }_{k,j}=z^4_{k,j}+{\gamma }_k$

 Evaluate the derivative of $f_k\left(z\right)$ at $z_{k,j}$
\[
f^{'}_k\left(z_{k,j}\right)=\left({y_k}^2\left(1,z^4\right)\right)^{'}\left. \right\vert _{{_{z=z_{k,j}}}}\left[\frac{ \omega_{1}\left(z_{k,j}\right)}{y_k\left(1,z^{4}_{k,j}\right)}\right]+{y_k}^2\left(1,z^4_{k,j}\right){\left[\frac{{\omega }_1(z)}{y_k\left(1,z^4\right)}\right]}^{'}\left. \right\vert _{z=z_{k,j}} -
\]
\[
-\left(y^{'2}_k(1,z^4)\right)^{'}\left. \right\vert _{{_{z=z_{k,j}}}}\left[\frac{\omega _{2}\left(z_{k,j}\right)}{y_k\left(1,z^4_{k,j}\right)}\right]-y^{'2}_{k}(1,z^4_{k,j})\left[\frac{\omega _{2}(z)}{y_k^{'}\left(1,z^4\right)}\right]^{'}\left. \right\vert _{{_{z=z_{k,j}}}}
\]
which in virtue of ${\omega }_1\left(z_{k,j}\right)=0,\ {\omega }_2\left(z_{k,j}\right)=0$ yields
\begin{equation} 
f^{'}_k\left(z_{k,j}\right)=y_k^2\left(1,z^4_{k,j}\right){\left[{\omega }_1(z)\right]}^{'}\left. {}\right\vert _{{_{z=z_{k,j}}}}- y^{'2}_k(1,z^4_{k,j})\left[{\omega }_2\left(z\right)\right]^{'}\left. {}\right\vert_{{_{z=z_{k,j}}}}
\end{equation}
From which with  (5.16) in mind we get
\begin{equation} 
f^{'}_k\left(z_{k,j}\right)=4 z^{3}_{k,j}\left\|\Psi_{k,j}\right\|^{2}_{\Lambda }=4z^{3}_{k,j}\frac{1}{c^2_{k,j}}
\end{equation}
where
\[c^2_{k,j}=\frac{1}{\left\|\Psi _{k,j}\right\|^{2}_{\Lambda}}=\frac{1}{\left\|\Phi _{k,j}\right\|^{2}_1}\]
which completes the proof.

 Thus, (5.18), (5.20) relates the characteristic determinant or  ${\omega }_1\left(z\right)$ and ${\omega }_2\left(z\right)$  whose comman zeros define the eigenvalues and norms of orthogonal eigenvectors. The function $f_k\left(z\right)$ and its analogous for the problem  (5.1), (5.2) but with  boundary conditions different than (5.3), (5.4)  have essential role in our derivation of trace formula  described in the next section.

 If  $c_{1k}$ is defined from   (5.8),  then $f_k\left(z\right)$ from   (5.18) is simplified to the form
\begin{equation} 
f_k\left(z\right)\equiv y_k\left(1,z^4\right){\omega }_1(z)
\end{equation}
$\Upsilon_{k,j}=c_{k,j}\Psi_{k,j}$  are  orthonormal eigenvectors of the operator $L_{0k}$ associated with problem (5.1)-(5.4) in the space $\Lambda,$ and
\[{Y_{k,j}=c}_{k,j}{\Phi }_{k,j}=c_{k,j}{\Psi}_{k,j}{\varphi }_k\]
are orthonormal eigenvectors of the operator $L_0$  associated  with problem (3.1)-(3.3) in $H_1$. 
\end{proof}

\section{Evaluation of regularized trace}

Before passing to derivations, put on $q(t)$ the next condition:
\begin{equation} 
\sum^{\infty }_{k=1}{{\left(q\left(t\right){\varphi }_k,{\varphi }_k\right)}_H<\infty }
\end{equation} 

From Theorem 2.3 and Note 2.1 the operator $L_1=L^1_0+Q$ is discrete. Recall that eigenvalues of  $L_0$ and $L_1$ are denoted  by ${\lambda }_1\le {\lambda }_2\le \dots $ and ${\mu }_1\le {\mu }_2\dots,$ respectively. In virtue of  Theorem 3.1 from section 3 and Theorem 1 from [10](application to our case is justified as in our work [9])
\begin{equation} \label{GrindEQ__5_1_}
{\mathop{\mathrm{lim}}_{m\to \infty } \sum^{n_m}_{n=1}{\left({\mu }_n-{\lambda }_n-{\left(QY_{k_nj_n},Y_{k_nj_n}\right)}_1\right)=}\ }0
\end{equation}
for some subsequence of natural numbers $\left\{n_m\right\}$ satisfying conditions of Lemma3 from [10].

In virtue of the asymptotic formula for $z_{k,j}$ the next lemma  is proved (proof is similar to one , for example in [9], Lemma 4.2).

\begin{lemma} The series
\[\sum^{\infty }_{j=1}{\sum^{\infty }_{k=1}{{\left(QY_{k,j},Y_{k,j}\right)}_1}}\]
is absolutely convergent.
\end{lemma}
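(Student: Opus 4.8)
The plan is to reduce the bilinear form $(QY_{k,j},Y_{k,j})_1$ to a scalar integral, estimate it through the norming constants and the eigenvalue asymptotics, and then sum first in $j$ and afterwards in $k$. Since $QY=\{q(t)y(t),0,0\}$ by (2.23), the second and third components of $QY_{k,j}$ vanish, so only the $L_2(H,(0,1))$ part of the scalar product (1.2) survives. Writing the first component of $Y_{k,j}$ from (4.5) as $c_{k,j}\psi_{k,j}(t)\varphi_k$ with the scalar profile $\psi_{k,j}(t)=\sin z_{k,j}t+H(z_{k,j})\,\mathrm{sh}\,z_{k,j}t$, and using that $\varphi_k$ is a fixed unit vector, I would obtain
\[
(QY_{k,j},Y_{k,j})_1=c_{k,j}^2\int_0^1\psi_{k,j}(t)^2\,(q(t)\varphi_k,\varphi_k)_H\,dt .
\]
Hence $|(QY_{k,j},Y_{k,j})_1|\le c_{k,j}^2\,\|\psi_{k,j}\|_{C[0,1]}^2\int_0^1|(q(t)\varphi_k,\varphi_k)_H|\,dt$, which isolates the two quantities that must be controlled: the geometric factor $c_{k,j}^2\|\psi_{k,j}\|_{C[0,1]}^2$, and the scalar integral of $(q(t)\varphi_k,\varphi_k)_H$.

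Second, I would establish the behaviour of the geometric factor in $j$, uniformly in $k$. The crucial input is that $H(z_{k,j})$ is exponentially small: in the ratio (4.2) the denominator $z\,\mathrm{sh}\,z-(z^4+\gamma_k)\gamma_k^\alpha\,\mathrm{ch}\,z$ is dominated, for the real roots $z_{k,j}\sim\pi j+\frac{\pi}{4}$ of (3.18), by the exponentially growing term $(z^4+\gamma_k)\gamma_k^\alpha\,\mathrm{ch}\,z$, while the numerator stays polynomially bounded. Consequently $H(z_{k,j})\,\mathrm{sh}\,z_{k,j}t$ is uniformly bounded on $[0,1]$ (and exponentially small away from $t=1$), so $\psi_{k,j}$ is bounded in $C[0,1]$ and $\int_0^1\psi_{k,j}^2\,dt\to\frac12$. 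For $c_{k,j}^2=1/\|\Psi_{k,j}\|_\Lambda^2$ I would use (4.9), equivalently (4.8) together with Theorem 5.1 (which gives $\|\Psi_{k,j}\|_\Lambda^2=f_k'(z_{k,j})/(4z_{k,j}^3)$), to extract the leading growth in $z_{k,j}$ and hence to read off the decay in $j$ of $c_{k,j}^2\|\psi_{k,j}\|_{C[0,1]}^2$.

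The third step is the summation. Once the geometric factor is bounded by a sequence of the form $C\,\theta_{k,j}$ with $\sum_{j}\theta_{k,j}\le C'$ uniformly in $k$ (the eigenvalue asymptotics $z_{k,j}\sim\pi j+\frac{\pi}{4}$ supplying a $j$-series comparable to a convergent one), the double series is majorised by
\[
\sum_{k=1}^\infty\Big(\sum_{j=1}^\infty|(QY_{k,j},Y_{k,j})_1|\Big)\le C''\sum_{k=1}^\infty\int_0^1|(q(t)\varphi_k,\varphi_k)_H|\,dt ,
\]
which converges by the hypothesis (6.1) (used in its integrated form, together with $\gamma_k\ge 1$). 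The contributions of the imaginary roots $iz_{k,j}$ and of the finitely many small-in-modulus roots $\eta_{k,j}$ are handled in the same way, the former by the symmetry noted after (3.15) and the latter because they are finite in number for each $k$ and again weighted by $(q(t)\varphi_k,\varphi_k)_H$.

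I expect the main obstacle to be the uniform-in-$k$ asymptotics of the norming constants $c_{k,j}^2$ and, with it, the genuine decay in $j$ of the geometric factor. The difficulty is that the boundary values $\psi_{k,j}(1)$ and $\psi_{k,j}'(1)$, which feed the $\gamma_k^\alpha$-weighted terms of (4.9), do not remain of order one but collapse to leading order precisely at the roots of (3.15); hence a naive substitution of $z_{k,j}\sim\pi j+\frac{\pi}{4}$ is insufficient, and one must expand $z_{k,j}$ to the next order to identify which term of (4.9) actually controls $\|\Psi_{k,j}\|_\Lambda^2$. This is the delicate point on which summability in $j$ hinges, and where I would concentrate the rigorous estimates, following the scheme of [9].
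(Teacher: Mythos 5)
Your first step is exactly the reduction the paper itself performs in (6.5)--(6.7), and your use of (6.1) for the $k$-summation is also what the paper intends; note that the paper offers no detailed proof of this lemma at all, only the remark that it follows from the asymptotics of $z_{k,j}$ as in [9, Lemma 4.2], so in outline you are on the same track. The genuine gap is in your third step: the ``geometric factor'' $c_{k,j}^2\|\psi_{k,j}\|_{C[0,1]}^2$ admits no $j$-summable majorant $\theta_{k,j}$, because the norming constants do not decay in $j$. Indeed, from (4.2) one has $H(z_{k,j})\sim -\cos z_{k,j}/\mathrm{ch}\,z_{k,j}$ for the real roots, so $\psi_{k,j}$ is bounded and $\int_0^1\psi_{k,j}^2\,dt\to \tfrac12$; but the boundary contributions to $\|\Psi_{k,j}\|^2_{\Lambda}$ collapse at the roots, exactly as you suspected in your last paragraph: $\sin z+H\,\mathrm{sh}\,z\approx\sin z-\cos z=\sqrt2\,\sin(z-\tfrac{\pi}{4})$, which is $O(\varepsilon_{k,j})$ at $z=z_{k,j}=\pi j+\tfrac{\pi}{4}+\varepsilon_{k,j}$, and $z(\cos z+H\,\mathrm{ch}\,z)=O\bigl(z^2/((z^4+\gamma_k)\gamma_k^{\alpha})\bigr)$. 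Hence $\|\Psi_{k,j}\|^2_{\Lambda}\to\tfrac12$ and $c_{k,j}^2\to 2$ as $j\to\infty$ for each fixed $k$. Your proposed majorization $\sum_j|(QY_{k,j},Y_{k,j})_1|\le C''\int_0^1|(q(t)\varphi_k,\varphi_k)|\,dt$ therefore cannot hold: the left-hand side has infinitely many terms each comparable to $\int_0^1\sin^2(z_{k,j}t)\,q_k(t)\,dt$, which does not tend to zero summably under a sup-norm bound alone.

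The decay in $j$ has to be extracted not from the size of $c_{k,j}^2\psi_{k,j}^2$ but from its oscillation against $q_k$: after writing $\sin^2(z_{k,j}t)=\tfrac12(1-\cos 2z_{k,j}t)$ one is left with oscillatory integrals $\int_0^1\cos(2z_{k,j}t)\,q_k(t)\,dt$ and Laplace-type integrals $\int_0^1 e^{2z_{k,j}(t-1)}q_k(t)\,dt$ coming from the $H\,\mathrm{sh}$ and $H^2\,\mathrm{sh}^2$ pieces (this is precisely the form (6.6)--(6.7) of the paper). Integration by parts gives only $O(1/z_{k,j})$ per term, with the boundary values $q_k(0),q_k(1)$ appearing, and absolute summability in $j$ then rests on cancellations among these $O(1/j)$ contributions together with smoothness of $q$ beyond condition (6.1). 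That estimate --- not the sup-norm bound --- is the actual content of the cited [9, Lemma 4.2], and it is the step your proposal replaces by an assumption that is false.
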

 From \eqref{GrindEQ__5_1_} and Lemma5.1
\begin{equation} \label{GrindEQ__5_2_}
{\mathop{\mathrm{lim}}_{m\to \infty } \sum^{n_m}_{n=1}{\left({\mu }_n-{\lambda }_n\right)=}\ }{\mathop{\mathrm{lim}}_{m\to \infty } \sum^{n_m}_{n=1}{{\left(QY_{k_nj_n},Y_{k_nj_n}\right)}_1}=\sum^{\infty }_{j=1}{\sum^{\infty }_{k=1}{{\left(QY_{k,j},Y_{k,j}\right)}_1}}}
\end{equation}
Denote by $\sum\limits^{\infty ' }_{n=1}{\left({\mu }_n-{\lambda }_n\right)}$ the limit on the left side of (6.3) and call it regularized   trace of $L_1$.
\begin{equation} 
\sum^{\infty ' }_{n=1}{\left({\mu }_n-{\lambda }_n\right)}=\sum^{\infty }_{j=1}{\sum^{\infty }_{k=1}{{\left(QY_{k,j},Y_{k,j}\right)}_1}}
\end{equation}

From  lemma 6.1 and from (6.4), with (4.5)  in mind, we get:
\[\sum^{\infty ' }_{n=1}{\left({\mu }_n-{\lambda }_n\right)}=\sum^{\infty }_{j=1}{\sum^{\infty }_{k=1}{{\left(QY_{k,j},Y_{k,j}\right)}_1}}=\sum^{\infty }_{j=1}{\sum^{\infty }_{k=1}{c^2_{k,j}\int^1_0{{q_k\left(t\right)\ y}^2_k(t,z^4_{k,j})dt}}}=
\]\[
=\sum^{\infty }_{j=1}{\sum^{\infty }_{k=1}{c^2_{k,j}\int^1_0{q_k\left(t\right)\left[{sin}^2z_{k,j}t+2H\left(z_{j,k}\right)shz_{k,j}tsinz_{k,j}t+{H\left(z_{j,k}\right)}^2{sh}^2z_{k,j}t\right]dt}}},
\]
where 
\begin{equation}
q_k\left(t\right)=\left(q\left(t\right){\varphi }_k,{\varphi }_k\right)  \end{equation}

Without loss of generality, putting $\int^1_0{q_k\left(t\right)dt=0}$ with Theorem 5.1 in mind we come to
 \[
\sum^{\infty '}_{n=1}\left({\mu }_n-{\lambda }_n\right)=
\]
\small{
\begin{equation}
=\sum^{\infty }_{j=1}{\sum^{\infty }_{k=1}{c^2_{k,j}\frac{\int^1_0{q_k\left(t\right)\ \left[-cos2z_{k,j}t +4H\left(z_{j,k}\right)shz_{k,j}tsinz_{k,j}t+{H(z_{j,k})}^2ch2z_{k,j}t\right]}dt}{2}}}
\end{equation}}
Consider, the $N$-th partial sum of the inner series: 
\begin{equation} 
\sum\limits^N_{k=1}c^2_{k,j}\frac{\int\limits^{1}_{0}q_{k}\left(t\right)\left[-cos2z_{k,j}t+4H\left(z_{j,k}\right)shz_{k,j}t\sin z_{k,j}t+{H(z_{j,k})}^2ch 2 z_{k,j} \right]dt}{2} 
\end{equation}
or in a more compact form $\sum\limits^N_{k=1}{c^2_{k,j}\int\limits^1_0{q_k\left(t\right)y^2_k(t,{z_{k,j}}^4)dt}}$

 Our aim in that section is to find the sum of the series in (6.6). For that sake in our previous works , for example, [8,9],  for evaluating the value of the right side of (6.4)  we use Cauchy's residue theorem, further tending contour of integration to infinity  and using asymptotic formulas for the integrand. Namely, each time we have selected  a function of a complex variable with poles at $z_{j,k}$ (zeros of characteristic determinant): they are the functions, whose denominators are defined by the  characteristic determinants $\Delta(z)$ corresponding to the problem (whose equivalent in the present work is equation  (3.14)) and numerators are suggested by numerators of series the sum of  which to be evaluated (here numerator of (6.7)). Usually, residues at poles of that function give terms of  sum analog of which here is (6.6) which indicates on some relation between characteristic determinant of the associated operator and norming constants.  Further, using asymptotic formulas found for $z_{k,j}$ on the integration contour, we arrived at the desired formulas.

 In thus work, since the norming constants defined by (4.8) and (4.9) and  $\Delta (z)$  from (3.14) have too long expressions, and that is why to manipulate with them by using the above indicated methods is impossible.

 For that reason , by (5.19) and  (5.20) we  establish the indicated above relation between ${\omega }_1\left(z\right),\ {\omega }_2(z)$ and norming constants existence of which was intuitively clear for us in  all previous works. Remind that by determining, for example, $c_{1k}$ from ${\omega }_1\left(z\right)=0$ and substituting in ${\omega }_2\left(z\right)=0$  yields an equation equivalent to $\mathrm{\Delta }\left(z\right)=0$ (see (3.14)). Note that this a is more general method and might be used in  studies of regularized traces in the  future.

Interchange the integral and the sum in (6.7) and denote by $S_N(t)$ the following  expression
\begin{equation} 
S_N(t)=\sum^N_{j=1}{c^2_{k,j}\ y^2_k(t,z^4)}
\end{equation}
Now using (5.20),(5.21), we see that the following functions of a complex variable z
\begin{equation} 
F_k\left(z,t\right)=\frac{4z^3y^2_k(t,z^4)}{f_k\left(z\right)}=\frac{4z^3y^2_k(t,z^4)}{y_k\left(1,z^4\right){\omega }_1\left(z\right)\mathrm{\ }}\
\end{equation}
 have poles at common roots $z=z_{k,j}$ of system (5.7),(5.8)  and the residues of  $F_k\left(z,t\right)$ at these poles  are the terms of the sum  (6.7):
\begin{equation}
{res}_{z=z_{k,j}}F_k\left(z,t\right)=c^2_{k,j}\ y^2_k(t,z^4) \end{equation}
In virtue of (6.6), (6.10) we arrive at the next lemma

\begin{lemma}
\begin{equation} 
\sum^{'\infty }_{n=1}{\left({\mu }_n-{\lambda }_n\right)=\sum^{\infty }_{k=1}{\sum^{\infty }_{j=1}{\int^1_0{{res}_{z=z_{k,j}}F_k\left(z,t\right)q_k(t)dt}}}}
\end{equation}
where $F_k\left(z,t\right)$ is defined by (5.8)
\end{lemma}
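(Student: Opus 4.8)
The plan is to read the statement as an immediate consequence of the trace identity (6.4) together with the residue evaluation (6.10); the genuine analytic content sits in Theorem 5.1, which I take as given. First I would start from
\[
\sum^{\infty ' }_{n=1}{\left({\mu }_n-{\lambda }_n\right)}=\sum^{\infty }_{j=1}{\sum^{\infty }_{k=1}{{\left(QY_{k,j},Y_{k,j}\right)}_1}},
\]
which is (6.4) and which, by Lemma 6.1, is an absolutely convergent double series. Since $Q=\{q(t),0,0\}$ acts only on the first component, the $H_1$-scalar product collapses to the $L_2(H,(0,1))$-pairing of the first components of $Y_{k,j}$. Inserting the eigenvector form (4.5), writing the first component as $c_{k,j}\,y_k(t,z^4_{k,j})\varphi_k$, and using $q_k(t)=(q(t)\varphi_k,\varphi_k)$ together with $\|\varphi_k\|=1$, each summand reduces to
\[
{\left(QY_{k,j},Y_{k,j}\right)}_1=c^2_{k,j}\int^1_0 q_k(t)\,y^2_k(t,z^4_{k,j})\,dt,
\]
which is exactly the representation obtained right after (6.4).

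The second step is to recognize the factor $c^2_{k,j}\,y^2_k(t,z^4_{k,j})$ as a residue. Because $z_{k,j}$ is a common zero of $\omega_1$ and $\omega_2$, it is a zero of $f_k(z)=y_k(1,z^4)\omega_1(z)$; and Theorem 5.1 gives $f'_k(z_{k,j})=4z^3_{k,j}/c^2_{k,j}\neq 0$, so in fact $z_{k,j}$ is a simple zero. Consequently the meromorphic function $F_k(z,t)=4z^3y^2_k(t,z^4)/f_k(z)$ has a simple pole there, and the simple-pole formula yields
\[
{res}_{z=z_{k,j}}F_k(z,t)=\frac{4z^3_{k,j}y^2_k(t,z^4_{k,j})}{f'_k(z_{k,j})}=c^2_{k,j}\,y^2_k(t,z^4_{k,j}),
\]
which is precisely (6.10). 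Substituting this back, I would record that for every pair $(k,j)$ one has ${\left(QY_{k,j},Y_{k,j}\right)}_1=\int^1_0 {res}_{z=z_{k,j}}F_k(z,t)\,q_k(t)\,dt$.

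Finally I would sum over $k$ and $j$ to obtain the asserted right-hand side. The algebra here is pure bookkeeping once (6.10) is in hand, so the main obstacle is the legitimacy of the rearrangement rather than any computation: one must use the absolute convergence supplied by Lemma 6.1 — together with the boundedness of $q_k(t)$ on $[0,1]$ — to justify reordering the double series from $\sum_{j}\sum_{k}$ into $\sum_{k}\sum_{j}$ and interchanging the finite $t$-integral with the summation. Confirming the simple-pole structure (i.e. $f'_k(z_{k,j})\neq 0$, which is what makes the residue formula behind (6.10) valid) is the other point requiring care, and it is guaranteed by Theorem 5.1. With both justified, the chain of equalities closes and the lemma follows.
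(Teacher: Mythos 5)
Your proposal is correct and follows essentially the same route as the paper: the paper likewise combines the trace identity (6.4) with the eigenvector representation (4.5) to get the integral form of $(QY_{k,j},Y_{k,j})_1$, and then invokes (5.20)--(5.21) to read off the residue identity (6.10), from which the lemma is immediate. Your added remarks — that Theorem 5.1 guarantees $f'_k(z_{k,j})\neq 0$ so the pole is simple, and that Lemma 6.1's absolute convergence licenses the reordering of the double sum and the interchange with the $t$-integral — only make explicit what the paper leaves tacit.
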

(4.8) and (4.9) show that expressions for  norming constants $c_{k,j}$ are  too long. In our previous works, we  write $c_{k,j}$ in an open form and write concrete form for expressions analogous to $y_k\left(1,\lambda \right){\omega }_1(z)$ in  choice of  $F_k\left(z,t\right)$ . But since here  those functions have quite long expressions, it is impossible to arrive at results or manipulate  formulas  writing them in an open form. That is why there arise a need in derivations (5.10)-(5.17) which let us  relate the norms of eigenvectors with ${\omega }_1\left(z\right),\ {\omega }_2\left(z\right)$ or characteristic determinant  $\Delta (z)$ whose zeros are related  to eigenvalues  of  $L_0$ .

 But the function $F_k\left(z,t\right)$ together with $z_{k,j}$ has poles also at zeros of $y_k\left(1,z^4\right)$. Denoting the zeros of $y_k\left(1,z^4\right)$ by $z={\beta }_{k,j}$ we have
\[{res}_{z={\beta }_{k,j}}F_k\left(z,t\right)=\frac{4{{\beta }_{k,j}}^3y^2_k(t,{{\beta }_{k,j}}^4)}{\dot{y_k}\left(1,{{\beta }_{k,j}}^4\right){\omega }_1\left({\beta }_{k,j}\right)}\]
 where dot indicate derivative with respect  to $z$.
 
But from $y\left(1,\ {{\beta }_{k,j}}^4\right)=0$
\begin{equation} 
{res}_{z={\beta }_{k,j}}F_k\left(z,t\right)=\frac{4{{\beta }_{k,j}}^3y^2_k(t,{{\beta }_{k,j}}^4)}{\dot{y_k}\left(1,{{\beta }_{k,j}}^4\right)y'''\left(1,\ {{\beta }_{k,j}}^4\right)}.
\end{equation}

Note that ${{\beta }_{k,j}}^4+{\gamma }_k$ are the eigenvalues of problem (5.1),(5.2)  and (6.13),(6.14) for each fixed $k$
\begin{equation} 
y_k\left(1\right)=0
\end{equation}
\begin{equation} 
y_k"\left(1\right)-\lambda {\gamma }^{\alpha }_ky_k'\left(1\right)=0
\end{equation}
and the collection ${\left\{{{\beta }_{k,j}}^4+{\gamma }_k\right\}}^{\infty }_{k,j=1}$ are the eigenvalues of problem (3.1), (3.2), and (6.15), (6.16)
\begin{equation} 
y\left(1\right)=0
\end{equation}
\begin{equation} 
y"(1)-\lambda A^{\alpha }y'(1)=0
\end{equation}
Selecting rectangular  contour $l_N$ including inside it  $z_{k,j}$ and ${\beta }_{k,j}$  for each fixed $k$  and $ j=\overline{1,N}$ (we can choose  such a contour because of asymptotics of $z_{k,j}$, ${\beta }_{k,j}$ ), see, for example, [8,9]) and applying the Cauchy theorem  about residues we have
\begin{equation} 
\sum^N_{j=1}{{res}_{z=z_{k,j}}F_k\left(z,t\right)}=-\sum^N_{j=1}{{res}_{z={\beta }_{k,j}}F_k\left(z,t\right)}+\int_{l_N}{F_k\left(z,t\right)dz}
\end{equation}
Multiplying by  $q_k(t)$, integrating along [0,1] and passing to limit in (6.17)  as $N\to \infty $ yields
\[ 
\sum^{\infty }_{j=1}{\int^1_0{{res}_{z=z_{k,j}}F_k\left(z,t\right)q_k(t)dt}}=
\]
\begin{equation}
{-\sum^{\infty }_{j=1}{\int^1_0{{res}_{z={\beta }_{k,j}}F_k\left(z,t\right)q_k(t)dt}}+\mathop{\mathrm{lim}}_{N\to \infty } \int_{l_N}{\int^1_0{F_k\left(z,t\right)dz}}\ }q_k(t)dtdz.
\end{equation}
 By using the asymptotics of $F_k\left(z,t\right)$ for large $\left|z\right|$  values it can be shown that  as  $N$ tends to infinity, the integral along the extended contours  approaches zero. So, with (6.11) and (6.12) in mind
\[
\sum^{\infty'}_{n=1}{\left({\mu }_n-{\lambda }_n\right)=\sum^{\infty }_{k=1}{\sum^{\infty }_{j=1}{\int^1_0{{res}_{z=z_{k,j}}F_k\left(z,t\right)q_k(t)dt}}}}=\]
\begin{equation} 
=-\sum^{\infty }_{k=1}{\sum^{\infty }_{j=1}{\int^1_0{{res}_{z={\beta }_{k,j}}F_k\left(z,t\right)q_k(t)dt}}}=-\sum^{\infty }_{j=1}{\frac{4{\beta }^3_{k,j}\int^1_0{y^2_k(t,{\beta }^4_{k,j})q_k(t)dt}}{2\dot{y_k}\left(1,{{\beta }_{k,j}}^4\right)y^{'''}_k\left(1,{{\beta }_{k,j}}^4\right)}}
\end{equation}
Let $L_{11}=$ $L_{01}+Q,$ where  $L_{01}$ is an operator corresponding to (3.1) (3.2),(6.15),(6.16)  which is  defined in space $H_2=L_2\left(H,\left(0,1\right)\right)\oplus H$ of vectors $Y= (y(t),y_{1}), Z = (z(t), z_{1})$ where $y_1,z_1 \in H$  , with scalar product defined as, ${\left(Y,Z\right)}_2={\left(y\left(t\right),z(t)\right)}_{L2(H,(01))}$+$\left({A^{-\frac{\alpha }{2}}y}_1,{A^{-\frac{\alpha }{2}}z}_1\right)$,$D\left(L_{01}\right)=\left\{Y\in D\left(L^*_0\right),y\left(1\right)=0,y_1=A^{\alpha }y'(1)\right\}$, $L_{01}Y=\left\{ly\left(t\right),\ y"(1)\right\}$ and $Q$ this time is defined as $QY=\left\{q\left(t\right)y\left(t\right),\ 0\right\}$. Moreover, denote by $L_{01k}$ the operator defined by   $L_{01k}\left(y_k\left(t\right),{\gamma }^{\alpha }_ky_k'\left(1\right)\right)=\left\{l_ky_k\left(t\right),y^{''}_k\left(1\right)\ \right\}$ in space $\Lambda _2=L_2\left(0,1\right)\oplus C$

\textbf{Theorem 6.1}
\[{res}_{z={\beta }_{k,j}}F_k\left(z,t\right)=-c^2_{k,j}\ y^2_k(t,{\beta }^4_{k,j})\]
\textit{with   $c^2_{k,j}=\frac{1}{{{\left\|\Phi _{k,j}\right\|}^2}_2}=\frac{1}{{{\left\|\Psi _{k,j}\right\|}^2}_{2}}$, where $\left\{\Phi_{k,j}\right\}$ this time are orthogonal eigenvectors of the operator $L_{01}$ in $H_2$ associated with problem (3.1) (3.2),(6.15),(6.16), and $\Psi _{k,j}$ are orthogonal eigenvectors of problem (5.1) (5.2),(6.13),(6.14).}

\textit{To prove it, from the right side of (6.19) we see that, it must be shown that}
\[c^2_{k,j}=-\frac{4{\beta }^3_{k,j}}{\dot{y_k}\left(1,{{\beta }_{k,j}}^4\right)y^{'''}_k\left(1,{{\beta }_{k,j}}^4\right)}\]

\begin{proof} Again multiplying (5.10) by $y_k\left(t,{\lambda }_{k,j}-{\gamma }_k\right)$, (5.11) $y_k\left(t,\lambda -{\gamma }_k\right)$, then subtracting the second relation  from the first, integrating the both sides of obtained the relation  along (0,1), adding to the obtained results the term 
\begin{equation}
\left(\lambda -{\lambda }_{k,j}\right)y_k'\left(1,\lambda -{\gamma }_k\right)y_k'\left(1,{\lambda }_{k,j}-{\gamma }_k\right){{\gamma }_k}^{\alpha }
\end{equation}
keeping  in mind
\begin{equation} 
y_k\left(1,{\lambda }_{k,j}-{\gamma }_k\right)=y_k\left(1,{{\beta }_{k,j}}^4\right)=0
\end{equation}
(we again keep notations $\sqrt[4]{\lambda -{\gamma }_k}=z,\ \sqrt[4]{{\lambda }_{k,j}-{\gamma }_k}{=\beta }_{k,j}$ , where  this time ${\lambda }_{k,j}={{\beta }_{k,j}}^4+{\gamma }_k$\ ,\ $j=1,\infty$  are eigenvalues  of the problem  (5.1),(5.2),(6.13), (6.14) we get:

\[E\equiv \int^1_{0}y_{k}^{IV}\left(t,z^4\right)y_k\left(t,\beta _{k,j}^{4}\right)dt-
\]
\[
-\int^1_{0}y_{k}^{IV}\left(t,\beta _{k,j}^{4}\right)y_k\left(t,z^4\right)dt+\left(\lambda -\lambda _{k,j}\right)y^{'}_k\left(1,z^{4}\right)y_k^{'}\left(1,\beta _{k,j}^{4}\right)\gamma _k^{\alpha }=\]
\[=\left(z^4-{{\beta }_{k,j}}^4\right)\int^1_0{y_k\left(t,z^4\right)y_k\left(t,{{\beta }_{k,j}}^4\right)dt+}\left(z^4-{{\beta }_{k,j}}^4\right)y^{'}_k\left(1,z^4\right)y_k^{'}\left(1,{{\beta }_{k,j}}^4\right){\gamma }^{\alpha }_k\]
Integration by parts gives
\[{{E=y}_k^{'''}\left(1,z^4\right)y_k(1,{{\beta }_{k,j}}^4)-y}_k^{'''}\left(1,{{\beta }_{k,j}}^4\right)y_k\left(1,z^4\right)-y_k^{''}\left(1,z^4\right)y_k^{'}(1,{{\beta }_{k,j}}^4)+
\]\[
+y_k^{''}\left(1,{{\beta }_{k,j}}^4\right)y_k^{'}\left(1,z^4\right) +\left(z^4-{{\beta }_{k,j}}^4\right)y^{'}_k\left(1,z^4\right)y_k'\left(1,{{\beta }_{k,j}}^4\right){{\gamma }_k}^{\alpha }= 
\]\[
={-y}_k^{'''}\left(1,{{\beta }_{k,j}}^4\right)y_k\left(1,z^4\right)-y_k^{''}\left(1,z^4\right)y_k^{'}(1,{{\beta }_{k,j}}^4)+y_k^{''}\left(1,{{\beta }_{k,j}}^4\right)y_k^{'}\left(1,z^4\right)\ +
\]
\[
+\left(z^4-{{\beta }_{k,j}}^4\right)y^{'}_k\left(1,z^4\right)y_k^{'}\left(1,{{\beta }_{k,j}}^4\right){{\gamma }_k}^{\alpha }=\]
\[ 
=-y^{'''}_k\left(1,{{\beta }_{k,j}}^4\right)\left[y_k\left(1,z^4\right)-y_k\left(1,{{\beta }_{k,j}}^4\right)\right]-
\]
\[
-y^{'}_k\left(1,z^4\right)y_k'\left(1,{{\beta }_{k,j}}^4\right)\left[\frac{y_k^{''}\left(1,z^4\right)}{y_k^{'}\left(1,z^4\right)}-\frac{y_k^{''}\left(1,{{\beta }_{k,j}}^4\right)}{y_k^{'}\left(1,{{\beta }_{k,j}}^4\right)}\right]+
\]
\begin{equation}
+\left(z^4-{{\beta }_{k,j}}^4\right)y^{'}_k\left(1,z^4\right)y_k'\left(1,{{\beta }_{k,j}}^4\right){{\gamma }_k}^{\alpha }
\end{equation}
So,
\[
\left(z^4-{{\beta }_{k,j}}^4\right)\int^1_0{y_k\left(t,z^4\right)y_k\left(t,{{\beta }_{k,j}}^4\right)dt}+\left(z^4-{{\beta }_{k,j}}^4\right)y^{'}_k\left(1,z^4\right)y^{'}_k\left(1,{{\beta }_{k,j}}^4\right){{\gamma }_k}^{\alpha }=
\]
\[
={-y}^{'''}_k\left(1,{{\beta }_{k,j}}^4\right)\left[y_k\left(1,z^4\right)-y_k\left(1,{{\beta }_{k,j}}^4\right)\right]-y^{'}_k\left(1,z^4\right)y_k^{'}\left(1,{{\beta }_{k,j}}^4\right)\left[\frac{y_k^{''}\left(1,z^4\right)}{y_k'\left(1,z^4\right)}-\frac{y_k^{''}\left(1,{{\beta }_{k,j}}^4\right)}{y_k^{'}\left(1,{{\beta }_{k,j}}^4\right)}\right]+
\]
\begin{equation} 
+\left(z^4-{{\beta }_{k,j}}^4\right)y^{'}_k\left(1,z^4\right)y_k^{'}\left(1,{{\beta }_{k,j}}^4\right){{\gamma }_k}^{\alpha }
\end{equation}
Recall that  the term $y_k\left(1,{{\beta }_{k,j}}^4\right)$ can  appear on the left side of (6.23) because of (6.21)

 Note here ${\lambda }_{k,j}={{\beta }_{k,j}}^4+{\gamma }_k$.

 Dividing the  both sides of (6.23) by $z-{\beta }_{k,j}$ , passing to the limit as $z\to {\beta }_{k,j}$ and denoting orthogonal eigenvectors of the  problem (5.1),(5.2),(6.13),(6.14)  again by $\Psi_{k,j}$,  we get \[
4\beta _{k,j}^{3}\left\Vert \Psi _{k,j}\right\Vert _{\Lambda
_{2}}^{2}=-y_{k}^{^{\prime \prime \prime }}\left( 1,\beta _{k,j}^{4}\right)
\dot{y_{k}}\left( 1,\beta _{k,j}^{4}\right) -
\]%
\[
-y_{k}^{^{\prime }}\left( 1,\beta _{k,j}^{4}\right)
^{2}\lim\limits_{z\rightarrow \beta _{k,j}}\left[ \frac{\frac{%
y_{k}^{^{\prime \prime }}\left( 1,z^{4}\right) }{y_{k}^{^{\prime }}\left(
1,z^{4}\right) }-\frac{{y_{k}^{^{\prime \prime }}\left( 1,{{\beta }_{k,j}^{4}%
}\right) }}{{y_{k}^{^{\prime }}\left( 1,{{\beta }_{k,j}}^{4}\right) }}}{{%
y_{k}^{^{\prime }}\left( 1,{{\beta }_{k,j}}^{4}\right) z-{\beta }_{k,j}-}%
\frac{z^{4}-{{\beta }_{k,j}^{4}}}{z-{{\beta }_{k,j}{\gamma }_{k}^{\alpha }}}}%
\right] =
\]%
\begin{equation}
=-y_{k}^{^{\prime \prime \prime }}\left( 1,\beta _{k,j}^{4}\right) y_{k}{%
\left( 1,z^{4}\right) }^{^{\prime }}|_{z=\beta _{k,j}}-{y_{k}^{^{\prime
}}\left( 1,{{\beta }_{k,j}}^{4}\right) }^{2}\left[ \frac{y_{k}^{^{\prime
\prime }}\left( 1,z^{4}\right) }{y_{k}^{^{\prime }}\left( 1,z^{4}\right) }%
-z^{4}{{\gamma }_{k}}^{\alpha }\right] ^{^{\prime }}|_{z=\beta _{k,j}}
\end{equation}
If $c_{2k}$  is defined from ${\omega }_2\left(z\right)=0,$ then in   right side of (6.24)   $\frac{y^{''}_k\left(1,z^4\right)}{y_k'\left(1,z^4\right)}-z^4{{\gamma }_k}^{\alpha }\equiv 0$   and    (6.24)  simplifies to
\begin{equation} 
4{{\beta }_{k,j}}^3{{\left\|\Psi _{k,j}\right\|}^2}_{\Lambda _2}={-y}^{'''}_k\left(1,{{\beta }_{k,j}}^4\right)y_k{\left(1,z^4\right)}^{'}|_{z={\beta }_{k,j}}
\end{equation}
or
\begin{equation} 
\left\|\Phi _{k,j}\right\|^{2}_{2}=\left\|\Psi _{k,j}\right\|^{2}_{\Lambda _2}=\frac{-y^{'''}_{k}\left(1,\beta _{k,j}^{4}\right)y_k{(1,z^4)}^{'}|_{z=\beta_{k,j}}}{4\beta _{k,j}^{3}}
\end{equation}
We have (see (5.12), (6.21) )
\begin{equation} 
{\omega }_1\left({\beta }_{k,j}\right)={-y}^{'''}_k\left(1,{\lambda }_{k,j}\right)
\end{equation}
So, for norming constants of the problem (3.1),(3.2),(6.15),(6.16) 
\begin{eqnarray}
\frac{1}{c_{k,j}^{2}} &=&{\left\Vert \Phi _{k,j}\right\Vert }_{H_{2}}^{2}={%
\left\Vert \Psi _{k,j}\right\Vert }_{\Lambda _{2}}^{2}=\frac{\omega _{1}({{%
\beta }_{k,j})y}_{k}(1,z^{4})^{\prime }|_{z={\beta }_{k,j}}}{4{{\beta }%
_{k,j}^{3}}}= \\
&=&\frac{-y_{k}^{\prime \prime \prime }(1,4{{\beta }_{k,j}^{4})y}%
_{k}(1,z^{4})|_{z={\beta }_{k,j}}}{4{{\beta }_{k,j}^{3}}}
\end{eqnarray}
From (6.12) and  (6.28) 

\[ {res}_{z={\beta }_{k,j}}F_k\left(z,t\right)=\frac{4{{\beta }_{k,j}}^3y^2_k(t,{\beta }^4_{k,j})}{\dot{y_k}\left(1,{{\beta }_{k,j}}^4\right){\omega }_1\left({\beta }_{k,j}\right)}=\frac{{4{{\beta }_{k,j}}^3 y^2_k(t,{\beta }^4_{k,j})\ }}{\dot{{2y}_k}\left(1,{{\beta }_{k,j}}^4\right)y^{'''}_k\left(1{{,\beta }_{k,j}}^4\right)}=
\]
\begin{equation} 
-\frac{y^2_k(t,{\beta }^4_{k,j})}{{{\left\|\Psi _{k,j}\right\|}^2}_1}={-c}^2_{k,j}y^2_k(t,{\beta }^4_{k,j})
\end{equation}
$c_{k,j}$ (if k is fixed )are now norming constants of the problem (5.11), (5.12), (6.13), (6.14) or for varying k of the problem (3.1),(3.2), (6.15), (6.16).

 Denoting the eigenvalues of $L_{01}$  and $L_{11}$  by ${\lambda }_{n1\ },\ {\mu }_{n1}$ respectively, we have  for the regularized trace of $L_{11}$  as in  (6.4)
\begin{equation} 
\sum^{\infty'}_{n=1}{\left({\mu }_{n1}-{\lambda }_{n1}\right)}=\sum^{\infty }_{j=1}{\sum^{\infty }_{k=1}{{\left(QY_{k,j},Y_{k,j}\right)}_2}},
\end{equation}
where $\left\{Y_{k,j}\right\}$ are  now orthonormal eigenvectors of the operator $L_{01}$.

In virtue of (6.31) and application   Theorem 6.1 to $L_{01}$ yields
\begin{equation} 
\sum^{\infty '}_{n=1}{\left({\mu }_{n1}-{\lambda }_{n1}\right)}=\sum^{\infty }_{k=1}{\sum^{\infty }_{j=1}{c^2_{k,j}\int^1_0{q_k(t)y^2_k(t,{\beta }^4_{k,j})dt}}}=-\sum^{\infty }_{k=1}{\sum^{\infty }_{j=1}{\int^1_0{{}^{res}_{z={\beta }_{k,j}}{F_k}(z,t)q_k(t)dt}}}
\end{equation}
By comparing (6.32) and (6.19) we get
\end{proof}

\textbf{Corollary 6.1.}
\[\sum^{'\infty }_{n=1}{\left({\mu }_n-{\lambda }_n\right)}=\sum^{'\infty }_{n=1}{\left({\mu }_{n1}-{\lambda }_{n1}\right)}\]
\textit{Thus, the problem is reduced to evaluating a regularized trace of corresponding operator  $L_{11}$.}

 To find the sum on the right side  of (6.32), again apply the technique used above: select function of a complex variable with the poles at ${\beta }_{k,j}$ and residues equal to the terms of the series on the left of side (6.32). Really,  setting
\[
K(z)\equiv-y_k\left(1,\lambda \right)y^{'''}_k\left(1,\lambda \right)-y^{'}_k\left(1,\lambda \right)[y^{''}_k\left(1,\lambda \right)- \lambda \gamma ^{\alpha }_ky^{'}_k\left(1,\lambda \right)]
\]
and in the solution of (5.1),(5.2) defining $c_{2k}$ from condition (5.4) $(y^{''}_k\left(1,\lambda \right)-\ \lambda {\gamma }^{\alpha }_k y^{'}_k\left(1,\lambda \right)=0)$ we have
\begin{equation} 
K^{'}\left({\beta }_{k,j}\right)=\dot{{-y}_k}\left(1,{\beta }^4_{k,j}\right)y^{'''}_k\left(1,{\beta }^4_{k,j}\right)
\end{equation}
So, if
\[F_{1k}\left(z,t\right)=\frac{4z^3y^2_k(t,z^4)}{K(z)}\]
  then in virtue of (6.33) and Theorem 6.1 (or relation 6.29)
\begin{eqnarray*}
{res}_{z={\beta }_{k,j}}F_{1k}\left( z,t\right)  &=&\frac{4{{\beta }_{k,j}}%
^{3}y_{k}^{2}(t,{\beta }_{k,j}^{4})}{K^{^{\prime }}\left( {\beta }%
_{k,j}\right) }=-\frac{4{{\beta }_{k,j}}^{3}y_{k}^{2}(t,{\beta }_{k,j}^{4})}{%
y_{k}^{\prime }\left( 1,{\beta }^4_{k,j}\right) y_{k}^{^{\prime \prime \prime
}}\left( 1,{\beta }^4_{k,j}\right) }= \\
&=&\frac{y_{k}^{2}(t,{\beta }_{k,j}^{4})}{{\left\Vert \Phi _{k,j}\right\Vert
}^{2}}=c_{k,j}^{2}y_{k}^{2}(t,{\beta }_{k,j}^{4})={res}_{z={\beta }%
_{k,j}}F_{k}\left( z,t\right)
\end{eqnarray*}
Now, if we define in the last relation $c_{2k}$ from $y_k\left(1,\lambda \right)=0$, then $F_{1k}\left(z,t\right)$ will be  simplified to the form
\begin{equation} 
F_{1k}\left(z,t\right)=\frac{4z^3y^2_k(t,z^4)}{-2y^{'}_{k}\left(1,z^4\right)[y^{''}_k\left(1,z^4\right)-\ \lambda {\gamma }^{\alpha }_ky^{'}_{k}\left(1,z^4\right)]}
\end{equation}
Thus,
\[\sum^{\infty' }_{n=1}{\left({\mu }_{n1}-{\lambda }_{n1}\right)}=\sum^{\infty }_{k=1}{\sum^{\infty }_{j=1}{\int^1_0{{}^{res}_{z={\beta }_{k,j}}{F_{1k}}(z,t)q_k(t)dt}}}\]
But $F_{1k}\left(z\right)$ together with ${\beta }_{k,j}$ has poles also at the  zeros of the function  $y^{'}_k\left(1,z^4\right). $ Denote them by ${\delta }_{k,j}.$ Thus,
\begin{equation} 
{res}_{z={\delta }_{k,j}}F_{1k}\left(z,t\right)=\frac{4{{\delta }_{k,j}}^3y^2_k(t,z^4)}{-[y^{'}_{k}\left(1,z^4\right)]^{'}|_{z={\delta }_{k,j}}y^{''}_k\left(1,\delta ^4_{k,j}\right)}.
\end{equation}
Again taking the contour $l_N$ ($j=\overline{1,N}$) including ${\beta }_{k,j}$ and ${\delta }_{k,j}$ and extending it to infinity, we will have
 \[
\sum^{\infty }_{j=1}{\int^1_0{{res}_{z=\beta _{k,j}}F_{1k}\left(z,t\right)q_k(t)dt}}=-\sum^{\infty }_{j=1}{\int^1_0{res_{z}=\delta _{k,j}}F_{1k}\left(z,t\right)q_k(t)dt}=
\]
\begin{equation}
=\sum^{\infty }_{j=1}{\frac{\int^1_0{{4{{\delta }_{k,j}}^3 y^2_k(t,z^4)\ }}q_k(t)dt}{[y^{'}_{k}\left(1,\lambda \right)]^{'}|_{z={\delta }_{k,j}}y^{''}_k\left(1,{\delta }^4_{k,j}\right)}},
\end{equation}
where $\frac{-[y^{'}_k\left(1,z^4\right)]^{'}|_{z={\delta }_{k,j}}y^{''}_{k}\left(1,{\delta }^4_{k,j}\right)}{4\delta_{k,j}^3}$ is the norm of orthogonal eigenvectors  of the operator  corresponding to  problem (5.1), (5.2) with the additional conditions
\begin{equation} 
y_k\left(1\right)=0,
\end{equation}
\begin{equation} 
y_k^{'}(1)=0
\end{equation}
or the norm of orthogonal eigenvectors of the operator $L_{02}$. Corresponding perturbed operator  $L_{12}=L_{02}+q(t)$ corresponds to problem (3.1),(3.2) and
\begin{equation} 
y\left(1\right)=0,
\end{equation}
\begin{equation} 
 y^{'}\left(1\right)=0
\end{equation}
in $L_2(H,(0,1))$

 When justifying in (6.36) that
\[c^2_{k,j}=\frac{-4{{\delta }_{k,j}}^3}{[y^{'}_k\left(1,z^4\right)]^{'}|_{z={\delta }_{k,j}}y^{''}_k\left(1,\delta _{k,j}\right)}\]
are, really, norming constants of the operator $L_{02}$ corresponding to (5.1),(5.2), (6.37),(6.38) in $H_3=L_2(H,(0,1))$ 
again use the above technique, but this time  we will not add any additional terms like the term  (6.20) in (6.22) or term  (5.12) in (5.13) (there it was done for defining  the norm in direct sum space, because of $\lambda$ in the boundary conditions. The last boundary conditions don't depend on  $\lambda$ and those condition define a selfadjoint operator in original space).

For not complicating notations denoting the eigenvectors again by $\Phi_{k,j}$ we have by above illustrated  in (5.13)-(5.17) or  (6.20)-(6.26)  technique and defining $c_{2k}$ from (6.37):
\[
\left(z^4-{\delta }^4_{k,j}\right)\int^1_0{y^2_k\left(t,z^4\right)dt= y^{'''}_k}\left(1,z^4\right)y_k\left(1,{\delta }^4_{k,j}\right)-y^{'''}_k\left(1,\delta^{4}_{k,j}\right)y_k\left(1,z^4\right)-
\]
\begin{equation} 
-y^{''}_{k}\left(1,z^4\right)y^{'}_{k}\left(1,{\delta }^4_{k,j}\right)+y^{''}_{k}\left(1,{\delta }^4_{k,j}\right)y^{'}_{k}\left(1,z^4\right)
\end{equation}
Dividing the both sides of this relation by $z-{\delta }_{k,j}$, letting $z\to {\delta }_{k,j}$ , defining $c_{2k}$ from (5.34) and taking into consideration $y^{'}_k\left(1,{\delta }^4_{k,j}\right)$ yields
\begin{equation}
4\delta_{k,j}^{3}\left\Vert \Phi_{k,j}\right\Vert_{3}^{2}=4
\delta_{k,j}^{3}\left\Vert \Psi_{k,j}\right\Vert_{\Lambda _{3}^{2}}=[y_{k}^{\prime}(1, z^{4}) ]^{'}|_{z=\delta_{k,j}}y_{k}^{''}\left(1,\delta_{k,j}^{4}\right)
\end{equation}
 Thus,
\[\sum^{\infty'}_{n=1}{\left(\mu _{n1}-\lambda _{n1}\right)=-\sum^{\infty }_{k=1}\sum^{\infty }_{j=1}\int\limits^{1}_{0} res_{{z}=\delta_{k,j}} }F_{1k}\left(z,t\right)q_k(t)dt=
 \]
\[
=\sum_{j=1}^{\infty }\frac{\int_{0}^{1}4\delta
_{k,j}^{3}y_{k}^{2}(t,z^{4})q_{k}(t)dt}{[y_{k}^{^{^{\prime }}}\left(
1,\lambda \right) ]^{^{\prime }}|_{z={\delta }_{k,j}}y_{k}^{\prime \prime
}\left( 1,\delta _{k,j}^{4}\right) }=\sum_{j=1}^{\infty }\sum_{k=1}^{\infty
}\left( QY_{k,j},Y_{k,j}\right) _{3}
\]
 where $Y_{k,j}$ are orthonormal eigenvectors of the operator $L_{02}$  in $H_3$.

Denoting eigenvalues of $L_{12},L_{02}$ by  ${\mu }_{n2},{\lambda }_{n2}$, respectively,
\[\sum^{\infty' }_{n=1}{\left({\mu }_n-{\lambda }_n\right)}=\sum^{\infty ' }_{n=1}{\left({\mu }_{n1}-{\lambda }_{n1}\right)}=\sum^{\infty ' }_{n=1}{\left({\mu }_{n2}-{\lambda }_{n2}\right)}\]

 Now we come to the evaluation of the sum of series  
\begin{equation}
\sum_{k=1}^{\infty }\sum_{j=1}^{\infty }{{\frac{\int_{0}^{1}{{4{{\delta }%
_{k,j}^{3}}y_{k}^{2}(t,z^{4})\ }}q_{k}(t)dt}{[y^{_{k}^{\prime }}\left(
1,\lambda \right) ]^{\prime }{|_{z={\delta }_{k,j}}y}_{k}^{\prime \prime
}\left( 1,{\delta }^4_{k,j}\right) }}}
\end{equation}

 For that sake select the folowing function of complex variable
\begin{equation} 
F_{2k}\left(z,t\right)=\frac{4z^3y^2_k(t,z^4)}{-y^{'''}_k\left(1,z^4\right){y_k\left(1,z^4\right)+y^{'}_k\left(1,z^4\right)y}^{''}_k\left(1,z^4\right)]}
\end{equation}
whose residues at  ${\delta }_{k,j}$ give terms of series (6.43). Selecting $c_{2k}$  the solution  of boundary value problem from $y_k\left(1\right)=0$, $F_{2k}\left(z,t\right)$ takes the form
\begin{equation}
F_{2k}\left( z,t\right) =\frac{4z^{3}y_{k}^{2}(t,z^{4})}{y_{k}^{\prime }\left(
1,\lambda \right) y_{k}^{\prime \prime }\left( 1,\lambda \right) ]}
\end{equation}%
and
\begin{equation}
{res}_{z={\delta }_{k,j}}F_{2k}\left( z,t\right) =\frac{{4{{\delta }_{k,j}}%
^{3}y_{k}^{2}(t,{{\delta }_{k,j}}^{4})\ }}{[y_{k}^{\prime }\left( 1,{\delta }^4
_{k,j}\right) ]^{\prime }{\ y}_{k}^{\prime \prime }\left( 1,z\right)
^{\prime }|_{z={\delta }_{k,j}}}.
\end{equation}

Obviously $F_{2k}\left(z,t\right)$  will have poles also at roots of the equation $y_k^{''}\left(1,\lambda \right)=0$. Denote these roots by ${\rho }_{k,j}$. Thus, ${\rho }_{k,j}$ are common roots of the equations
\[y_k\left(0,\lambda \right)=0, y_k^{''}\left(0,\lambda \right)=0,\]
\begin{equation} 
\ y_k\left(1,\lambda \right)=0,
\end{equation}
\begin{equation} 
y_k^{''}\left(1,\lambda \right)=0
\end{equation}
moreover,
\begin{equation}
res_{z={\rho }_{k,j}}F_{2k}(z,t)=\frac{4\rho _{k,j}^{3}y_{k}^{2}(t,\rho
_{k,j}^{4})}{\left[ y_{k}^{\prime }(1,\rho^4 _{k,j})\right] ^{\prime
}y_{k}^{\prime \prime }(1,z)^{\prime }|_{z=\rho _{k,j}}}
\end{equation}

 But
\[\frac{-[y^{'}_k\left(1,{\rho }_{k,j}\right)]^{'} y^{''}_k\left(1,z\right)^{'}|_{z={\rho }_{k,j}}}{4\rho _{k,j}^3} =\left\|\Phi_{k,j}\right\|^2_3\]
where $\Phi_{k,j}$ are the eigenvectors of problem  (3.1), (3.2) with additional   boundary conditions
\begin{equation} 
y\left(1\right)=0,y"(1)=0
\end{equation}

 Really,
\[
\left(z^4-{{\rho }^4}_{k,j}\right)\int^1_0{{y_k\left(t,z^4\right)}^2dt=}{y^{'''}_k\left(1,z^4\right)y_k\left(1,{\rho }^4_{k,j}\right)-y}^{'''}_k\left(1,{\rho }^4_{k,j}\right)y_k\left(1,z^4\right)-
\]
\[
-y^{''}_k\left(1,z^4\right)y_k'\left(1,{\rho }^4_{k,j}\right)+y^{''}_k\left(1,{\rho }^4_{k,j}\right)y_k'\left(1,z^4\right)
={-y}^{'''}_k\left(1,{\rho }^4_{k,j}\right)\left[y_k\left(1,z^4\right)-y_k\left(1,{\rho }^4_{k,j}\right)\right]-\]
\begin{equation} 
-y_k'\left(1,{\rho }^4_{k,j}\right)\left[y_k^{''}\left(1,z^4\right)-y_k^{''}\left(1,{\rho }^4_{k,j}\right)\right]
\end{equation}
If  $c_{2k}$ is defined from (6.47), then from (6.51) as  $z\to {\rho }_{k,j}$, 
\begin{equation} 
{{4\rho }^3}_{k,j}{\left\|{\mathrm{\Psi }}_{k,j}\right\|}^2_{\mathrm{\Lambda }}={-y}^{'''}_k\left(1,{\rho }^4_{k,j}\right)y_k\left(1,z^4\right)^{'}|_{z=\rho_{k,j}} -y_k'\left(1,{\rho }^4_{k,j}\right)y_k^{ ''}\left(1,z^4\right)^{'}|_{z=\rho_{k,j}}
\end{equation}
\[{c^2_{k,j}=\left\|{\mathrm{\Psi }}_{k,j}\right\|}^2_{\mathrm{\Lambda }}=\frac{-y_k'\left(1,{\rho }^4_{k,j}\right)y_k"\left(1,z^4\right)^{'}|_{z={\rho }_{k,j}}}{4{{\rho }_{k,j}}^3}\]
Denoting the eigenvalues of $L_{03}$ and $L_{03}+q(t)$ in $L_2(H,(0,1))$ by ${\lambda }_{n3},\; \mu_{n3}$, we come to the next theorem

\textbf{Theorem 6.2.}
 $\sum\limits^{\infty ' }_{n=1}{\left({\mu }_n-{\lambda }_n\right)}=\sum\limits^{\infty ' }_{n=1}{\left({\mu }_{n1}-{\lambda }_{n1}\right)}=\sum\limits^{\infty' }_{n=1}{\left({\mu }_{n2}-{\lambda }_{n2}\right)}$
\[=\sum^{\infty'}_{n=1}\left({\mu }_{n3}-{\lambda }_{n3}\right)
\]

Hence, 
\[\sum\limits^{\infty'}_{n=1}\left(\mu _{n2}-{\lambda }_{n2}\right)=\sum\limits^{\infty }_{k=1}\sum\limits^{\infty }_{j=1}\int^{1}_{0}res_{z=\delta _{k,j}}F_{2k}\left(z,t\right)q_k\left(t\right)dt=
\]
\[
=-\sum\limits^{\infty }_{k=1}\sum\limits^{\infty }_{j=1}\int\limits^{1}_{0} res_{z=\rho _{k,j}}F_{2k}\left(z,t\right)q_k\left(t\right)dt=\sum\limits^{\infty }_{k=1}\sum\limits^{\infty}_{j=1}\left(QY_{k,j},Y_{k,j}\right)_3
\]
where $Y_{k,j}$ are now the set of orthonormal eigenvectors of the operator $L_{03}$ .

 But on the other hand, since the solution satisfying conditions (5.2)  is  given by (5.18), then from (6.47), (6.48)  we have
\[c_{1k}sinz+c_{2k}shz=0\]
\[{-z}^2c_{1k}sinz+c_{2k}z^2shz=0\]
from which $c_{2k}=0$ and orthogonal eigenvectors are $c_{1k}sinzt$

From boundary conditions (6.47),(6.48) follows
$sinz=0$ or $z=\pi j,$ and eigenvalues are  ${\lambda }_{k,j}={(\pi j)}^4+{\gamma }_k$ and orthonormal eigenvectors of $L_{03}$ are $Y_{k,j}=\sqrt{2}sin\pi jt{\varphi }_k,\; k,j=\overline{1,\infty }$

 Thus, taking into consideration also the requirement (6.1)
\[{\sum\limits^{\infty }_{n=1}}^{'}{\left({\mu }_{n3}-{\lambda }_{n3}\right)}=\sum^{\infty }_{k=1}{\sum^{\infty }_{j=1}{{\left(QY_{k,j},Y_{k,j}\right)}_3}}=-\sum^{\infty }_{k=1}{\frac{q_k\left(1\right)+q_k(0)}{4}}\]

\begin{theorem} 
$
\sum\limits_{n=1}^{\infty ^{\prime }}\left( {\mu }_{n}-{\lambda }_{n}\right)
=\sum\limits_{n=1}^{\infty^{\prime }}\left( \mu _{n1}-\lambda _{n1}\right)
=\sum\limits_{n=1}^{\infty^{\prime }}\left( \mu _{n2}-\lambda _{n2}\right)$

\begin{equation} 
={\sum\limits^{\infty }_{n=1}}^{'}{\left({\mu }_{n3}-{\lambda }_{n3}\right)}=-{\sum\limits^{\infty }_{k=1}}^{'}{\frac{q_k\left(1\right)+q_k(0)}{4}}
\end{equation}
If we put on $q(t)$ a stronger condition than (6.1), namely would $q(t)$  belong to the trace class ${\sigma }_1$, then from (6.53) we get 
\end{theorem}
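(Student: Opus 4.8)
The plan is to obtain (6.53) by following the chain of reductions of this section down to the single operator whose spectrum is completely explicit, and only then to compute. The four regularized traces are identified in Theorem 6.2: the first equality is Corollary 6.1, while each subsequent passage (from $L_{11}$ to $L_{12}$, and from $L_{12}$ to $L_{03}$) is produced by the residue device of the section. One selects the meromorphic functions $F_k$, $F_{1k}$, $F_{2k}$ in turn, whose residues at the zeros $z_{k,j}$, $\beta_{k,j}$, $\delta_{k,j}$ reproduce the trace terms $(QY_{k,j},Y_{k,j})$ of $L_1$, $L_{11}$, $L_{12}$ by Theorem 6.1 and its analogues, while the extra poles at $\beta_{k,j}$, $\delta_{k,j}$, $\rho_{k,j}$ deliver the trace terms of the next operator; closing a rectangular contour $l_N$ around these poles and letting $N\to\infty$, Cauchy's theorem transfers the trace from one boundary-value problem to the next. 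This reduces the whole computation to evaluating $\sum_n'(\mu_{n3}-\lambda_{n3})$.

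The second step is the explicit diagonalization of $L_{03}$. The decisive feature is that its boundary data $y(0)=y''(0)=0$ and $y(1)=y''(1)=0$ are free of the spectral parameter, so $L_{03}$ is genuinely selfadjoint in $L_2(H,(0,1))$ with no exit to a larger space and carries no $\lambda$-dependent norming term. Projecting on the eigenbasis $\{\varphi_k\}$ and writing $z=\sqrt[4]{\lambda-\gamma_k}$, the solution of $y_k^{IV}+\gamma_k y_k=\lambda y_k$ obeying $y_k(0)=y_k''(0)=0$ is $c_{1k}\sinh zt+c_{2k}\sin zt$; the conditions at $t=1$ force $\sinh z\,\sin z=0$, hence the clean spectrum $z=\pi j$, $c_{1k}=0$, with normalized eigenvectors $Y_{k,j}=\sqrt{2}\,\sin\pi j t\,\varphi_k$ and eigenvalues $\lambda_{k,j}=(\pi j)^4+\gamma_k$. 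Since $L_{03}$ is discrete and $q$ is a trace-class perturbation, the first-order trace identity used for (6.4) gives $\sum_n'(\mu_{n3}-\lambda_{n3})=\sum_{k,j}(QY_{k,j},Y_{k,j})_3$, and using $\int_0^1 q_k=0$ one finds $(QY_{k,j},Y_{k,j})_3=\int_0^1 q_k(t)\,2\sin^2\pi j t\,dt=-\int_0^1 q_k(t)\cos 2\pi j t\,dt$.

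It remains to sum. Summing over $j$ is the evaluation of the $1$-periodic Fourier series of $q_k$ at the jump $t=0\equiv1$: Dirichlet's theorem gives $\sum_{j\ge1}\int_0^1 q_k(t)\cos 2\pi j t\,dt=\tfrac14\big(q_k(0)+q_k(1)\big)$ once the zero-mean condition removes the constant term, so $\sum_{j\ge1}(QY_{k,j},Y_{k,j})_3=-\tfrac14(q_k(0)+q_k(1))$; summing on $k$ under (6.1) then yields (6.53), and a bound on $\sum_k(q_k(0)+q_k(1))$, available when $q\in\sigma_1$, legitimizes the final interchange. I expect the two limiting steps to be the real obstacles: first, that $\int_{l_N}F_{\bullet k}(z,t)\,dz\to0$, which demands uniform large-$|z|$ asymptotics of the integrands on contours threaded between consecutive poles (placed using the asymptotics of $z_{k,j},\beta_{k,j},\delta_{k,j},\rho_{k,j}$); and second, the endpoint convergence of the Fourier series of $q_k$ together with the interchange of the $j$- and $k$-summations, which is exactly the point where hypothesis (6.1) must be strengthened to $q\in\sigma_1$.
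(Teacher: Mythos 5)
Your proposal is correct and follows essentially the same route as the paper: the chain of reductions $L_1\to L_{11}\to L_{12}\to L_{03}$ via Corollary 6.1 and the residue/contour device with $F_k$, $F_{1k}$, $F_{2k}$, followed by the explicit diagonalization of $L_{03}$ (eigenvectors $\sqrt{2}\sin\pi jt\,\varphi_k$, eigenvalues $(\pi j)^4+\gamma_k$) and the endpoint evaluation of the Fourier cosine series of $q_k$ under the zero-mean normalization. You also correctly locate the two analytic pressure points — the vanishing of $\int_{l_N}F_{\bullet k}\,dz$ and the interchange of the $j$- and $k$-sums under (6.1), respectively $q\in\sigma_1$ — which the paper treats by citation to its earlier asymptotics rather than in detail.
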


\textbf{Corollary 6.2.}
$ \sum\limits^{\infty'}_{n=1}\left(\mu_n-\lambda _n\right)=\sum\limits^{\infty'}_{n=1}\left(\mu_{n1}-{\lambda }_{n1}\right)=\sum\limits^{\infty' }_{n=1}\left(\mu_{n2}-\lambda _{n2}\right)=
$
\[=\sum\limits^{\infty'}_{n=1}\left(\mu _{n3}-{\lambda }_{n3}\right)=-\frac{trq\left(1\right)+trq(0)}{4}\]

\textbf{Acknowledgement.} The work was supported by Science Development Foundation under the President of the Republic of Azerbaijan-Grant N EIF-ETL-2020-2(36)-16/04/1-M-04

\end{document}